\title{Stirling Functions and a Generalization of Wilson's Theorem}
\author{Matthew A Williams}
\begin{document}
\maketitle
\begin{abstract}
For positive integers $m$ and $n$, denote $S(m,n)$ as the associated Stirling number of the second kind and let $z$ be a complex variable.  In this paper, we introduce the Stirling functions $S(m,n,z)$ which satisfy $S(m,n,\zeta) = S(m,n)$ for any $\zeta$ which lies in the zero set of a certain polynomial $P_{(m,n)}(z)$. For all real \textit{z}, the solutions of $S(m,n,z) = S(m,n)$ are computed and all real roots of the polynomial $P_{(m,n)}(z)$ are shown to be simple.  Applying the properties of the Stirling functions, we investigate the divisibility of the numbers $S(m,n)$ and then generalize Wilson's Theorem.
\end{abstract}
\newtheorem{thm}{Theorem}
\newtheorem{prop}{Proposition}
\newtheorem{cor}{Corollary}
\newtheorem{lem}{Lemma}
\raggedbottom
\section*{Preliminaries and Notation}
For brevity, we will denote $\mathbb{Z}_{+} = \mathbb{N} \setminus \{0\}$, $\mathbb{E} = 2\mathbb{Z}_{+}$ and $\mathbb{O} = \mathbb{Z}_{+} \setminus \mathbb{E}$.  If $P$ is a univariate polynomial with real or complex coefficients, define $Z(P) = \{z \in \mathbb{C} : P(z) = 0\}$ and $Z_{\mathbb{R}}(P) = Z(P) \cap \mathbb{R}$.  Throughout, it will be assumed that $m,n \in \mathbb{Z}_{+}$ and $d := m-n$.  In agreement with the notation of Riordan [3], $s(m,n)$ and $S(m,n)$ will denote the Stirling numbers of the first and second kinds, respectively.  We will also use the notation $B(m,n) = n!S(m,n)$. Although we are mainly concerned with the numbers $S(m,n)$, one recalls that for $z \in \mathbb{C}$
\[ (z)_{n} = z(z-1)\cdots(z-n+1) = \sum_{k=0}^{n}s(n,k)z^{k}. \]
Let $p$ be prime.  In connection to the divisibility of the numbers $S(m,n)$, we will use the abbreviation $n \equiv_{p} m$ in place of $n \equiv m \mbox{ (mod $p$)}$.  Note that $\nu_{p}(n) := \max\{\kappa \in \mathbb{N} : p^{\kappa} \mid n\}$
($\nu_{p}(n)$ is known as the $p$-adic valuation of $n$).  If $n = \sum_{k=0}^{m} b_{k}2^{k}$ $(b_{k} \in \{0,1\}, b_{m} = 1)$ is the binary expansion of $n$, let $n_{2}$ denote the binary representation of $n,$ written $b_{m}\cdots b_{0}$, where $(n_{2})_{k} := b_{k}$ and $m$ is called the \textit{MSB position} of $n_{2}$.  We will call an infinite or $n \times n$ square matrix $A = [a_{ij}]$ \textit{Pascal} if for every $i,j$,
\[ a_{ij} = {i+j \choose j} \mbox{ \hspace{1mm} or \hspace{1mm} } a_{ij} = {i+j \choose j} \mbox{ (mod $p$)}. \]
We note that if $A \in \mathbb{N}^{n \times n}$ is Pascal, then $A$ is symmetric and $\det(A) \equiv_{p} 1$ [5].  Finally, for the sake of concision, we will make use of the map $e : \mathbb{Z}_{+} \rightarrow \mathbb{E}$ such that
$$
e(n) = \left\{ \begin{array}{rl}
n &\mbox{ if $n \in \mathbb{E}$} \\
n+1 &\mbox{ otherwise}.
\end{array} \right.
$$ 
Following these definitions, let us introduce the Stirling functions:
\[ S(m,n,z) = \frac{(-1)^{d}}{n!}\sum_{k=0}^{n}{n \choose k}(-1)^{k}(z-k)^{m}. \]
It is known [1] that $S(m,n,z) = S(m,n)$ if $d \leq 0$.  The aim of this paper is to show that $d > 0$ implies $S(m,n,z) = S(m,n)$ for real $z$ only if $z \in \{0,n\}$ (Corollary 3), to investigate the $p$-adic valuation and parity of the numbers $S(m,n)$, and to formulate and prove a generalization of Wilson's Theorem (Proposition 14).
\section{The Real Solutions of $S(m,n,z) = S(m,n)$.}
We first observe a classical formula from combinatorics [1]:
\begin{thm}
The number of ways of partitioning a set of $m$ elements into $n$ nonempty subsets is given by
\begin{equation}
S(m,n) = \frac{1}{n!}\sum_{k=0}^{n}{n \choose k}(-1)^{k}(n-k)^{m}.
\end{equation}
\end{thm}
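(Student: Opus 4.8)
The plan is to establish (1) by a double-counting argument centered on the set $\mathrm{Surj}(m,n)$ of surjections from a fixed $m$-element set onto a fixed $n$-element set $[n] = \{1,\dots,n\}$. First I would count $\mathrm{Surj}(m,n)$ in terms of set partitions: a surjection $f$ has fibers $f^{-1}(1),\dots,f^{-1}(n)$ that form an \emph{ordered} partition of the domain into $n$ nonempty blocks, and conversely every such ordered partition arises from a unique surjection. Since there are $S(m,n)$ unordered partitions of an $m$-set into $n$ nonempty blocks and $n!$ ways to order the blocks, this gives $|\mathrm{Surj}(m,n)| = n!\,S(m,n)$.

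Next I would count $\mathrm{Surj}(m,n)$ by inclusion--exclusion over the codomain. For $i \in [n]$, let $A_i$ denote the set of all functions $f : \{1,\dots,m\} \to [n]$ whose image omits $i$; a function is a surjection precisely when it avoids $\bigcup_{i=1}^{n} A_i$. For any $J \subseteq [n]$ with $|J| = k$, a function lies in $\bigcap_{i \in J} A_i$ iff its image is contained in $[n] \setminus J$, so $\bigl|\bigcap_{i \in J} A_i\bigr| = (n-k)^m$, which depends on $J$ only through $k$. Applying the inclusion--exclusion principle and grouping the ${n \choose k}$ subsets of each size $k$ together yields
\[ |\mathrm{Surj}(m,n)| = \sum_{k=0}^{n} (-1)^k {n \choose k} (n-k)^m, \]
the $k = 0$ term being the count $n^m$ of all functions from an $m$-set to $[n]$.

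Equating the two evaluations and dividing by $n!$ gives (1). I would close by noting the degenerate cases as a sanity check: if $m < n$ there are no surjections, so both sides vanish, matching $S(m,n) = 0$, and if $m = n$ both sides equal $1$. I do not anticipate a genuine obstacle here; the one place deserving an explicit sentence is the reduction of the alternating sum over all subsets $J \subseteq [n]$ to the binomial sum over sizes $k$, which is precisely where the fact that $\bigl|\bigcap_{i \in J} A_i\bigr|$ is a function of $|J|$ alone gets used.
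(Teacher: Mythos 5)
Your argument is correct: counting the surjections from an $m$-set onto $[n]$ once via ordered partitions (giving $n!\,S(m,n)$) and once via inclusion--exclusion over omitted codomain elements (giving $\sum_{k=0}^{n}(-1)^{k}{n \choose k}(n-k)^{m}$) is the standard proof of this identity, and your handling of the reduction from subsets $J$ to sizes $k$ and of the degenerate cases is sound. Note that the paper itself offers no proof of Theorem 1 --- it records the formula as a classical fact with a citation --- so your write-up simply supplies the canonical argument the reference would give; there is nothing to reconcile between the two.
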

It was discovered independently by Ruiz [1,2] that
\begin{equation}
S(n,n) = \frac{1}{n!}\sum_{k=0}^{n}{n \choose k}(-1)^{k}(z-k)^{n} \hspace{5mm} (z \in \mathbb{R}).
\end{equation}
Indeed, (2) is an evident consequence of the Mean Value Theorem.  Katsuura [1] noticed that (2) holds even if $z$ is an arbitrary complex value, as did Vladimir Dragovic (independently).  The following proposition extends (2) to the case $d > 0$.
\begin{prop}
The equation $S(m,n,z) = S(m,n)$ holds for all $z \in \mathbb{C}$ if $d \leq 0$, and for only the roots of the polynomial
\[ P_{(m,n)}(z) = \sum_{j=1}^{d}{m \choose j}S(m-j,n)(-z)^{j} \]
in the case $d > 0$.
\end{prop}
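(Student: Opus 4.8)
The plan is to establish the polynomial identity
\[ S(m,n,z) - S(m,n) = \sum_{j=1}^{d}\binom{m}{j}S(m-j,n)(-z)^j \]
for all $m,n$ (with the convention that an empty sum is $0$); both halves of the proposition then follow at once, since the right-hand side is the zero polynomial when $d \le 0$ and is exactly $P_{(m,n)}(z)$ when $d > 0$.

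To get there, I would first expand $(z-k)^m = \sum_{i=0}^m \binom{m}{i}(-1)^{m-i}k^{m-i}z^i$ inside the definition of $S(m,n,z)$ and interchange the two finite sums, which exhibits $S(m,n,z)$ as a polynomial of degree at most $m$ in $z$ whose coefficient of $z^i$ is a fixed scalar times the alternating sum $\sigma_{m-i}$, where $\sigma_\ell := \sum_{k=0}^n \binom{n}{k}(-1)^k k^\ell$. I would then evaluate $\sigma_\ell$ for every integer $\ell \ge 0$: the substitution $k \mapsto n-k$ turns identity (1) of Theorem 1 into $\sigma_\ell = (-1)^n n!\,S(\ell,n)$ when $\ell \ge n$, while for $0 \le \ell < n$ both sides vanish ($\sigma_\ell$ is, up to sign, the $n$-th forward difference at $0$ of the polynomial $x^\ell$, and $S(\ell,n) = 0$), so the formula $\sigma_\ell = (-1)^n n!\,S(\ell,n)$ holds throughout. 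Substituting this back, cancelling the factors $n!$, and collecting signs — here $d + n = m$ makes $(-1)^d(-1)^{m-i}(-1)^n$ collapse to $(-1)^i$ — yields
\[ S(m,n,z) = \sum_{i=0}^{m}\binom{m}{i}S(m-i,n)(-z)^i. \]

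The last step is to read off the claimed identity: the $i = 0$ term is $\binom{m}{0}S(m,n)(-z)^0 = S(m,n)$, and $S(m-i,n) = 0$ as soon as $m-i < n$, i.e. as soon as $i > d$ (and note $d < m$, so no surviving index can exceed $m$). Hence only the indices $1 \le i \le d$ remain, giving $S(m,n,z) - S(m,n) = \sum_{j=1}^{d}\binom{m}{j}S(m-j,n)(-z)^j$. If $d \le 0$ this is the empty sum, so $S(m,n,z) = S(m,n)$ for all $z \in \mathbb{C}$. If $d > 0$ it is $P_{(m,n)}(z)$, whose top coefficient is $\binom{m}{d}S(n,n)(-1)^d = \binom{m}{d}(-1)^d \ne 0$; thus $P_{(m,n)}$ genuinely has degree $d$, and the equation $S(m,n,z) = S(m,n)$ holds for precisely the $z \in Z(P_{(m,n)})$.

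There is no real obstacle in this argument — it is a binomial expansion followed by a standard evaluation of an alternating binomial sum. The only points that need care are the sign bookkeeping in the collapse to $(-1)^i$, the mild extension of (1) to exponents $\ell < n$, and the remark that $P_{(m,n)}$ is not identically zero, so that ``the roots of $P_{(m,n)}$'' is a well-defined finite condition.
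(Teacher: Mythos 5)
Your proposal is correct and follows essentially the same route as the paper: expand $(z-k)^m$ binomially, interchange the sums, evaluate the alternating sum $\sum_k\binom{n}{k}(-1)^k k^\ell$ via the symmetry $k\mapsto n-k$ in Theorem 1, and truncate using $S(\ell,n)=0$ for $\ell<n$ to obtain the identity $S(m,n,z)=S(m,n)+P_{(m,n)}(z)$, from which both cases follow. Your added care about the exponents $\ell<n$ and about the nonvanishing leading coefficient $\binom{m}{d}(-1)^d$ (so that $\deg P_{(m,n)}=d$) only makes explicit what the paper leaves implicit.
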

\begin{proof}
Let $z \in \mathbb{C}$.  One easily verifies that
\begin{eqnarray}
\frac{1}{n!}\sum_{k=0}^{n}{n \choose k}(-1)^{k}(z-k)^{m} & = & \frac{1}{n!}\sum_{k=0}^{n}{n \choose k}(-1)^{k}\sum_{j=0}^{m}{m \choose j}z^{j}(-k)^{m-j} \nonumber \\ \nonumber
														 & = & (-1)^{d}\sum_{j=0}^{m}{m \choose j}\Bigg[\frac{1}{n!}\sum_{k=0}^{n}{n \choose k}(-1)^{n-k}k^{m-j}\Bigg](-z)^{j} \nonumber.
\end{eqnarray}
In view of Theorem 1, we have by symmetry
\begin{eqnarray}
(-1)^{d}\sum_{j=0}^{m}{m \choose j}\Bigg[\frac{1}{n!}\sum_{k=0}^{n}{n \choose k}(-1)^{n-k}k^{m-j}\Bigg](-z)^{j} & = &
																					(-1)^{d}\sum_{j=0}^{d}{m \choose j}S(m-j,n)(-z)^{j} \nonumber \\
																						& = & (-1)^{d}(S(m,n) + P_{(m,n)}(z)).
\end{eqnarray}
Hence by (3)
\begin{eqnarray}
S(m,n) & = & S(m,n,z) - P_{(m,n)}(z). 
\end{eqnarray}
Now by the definition of $P_{(m,n)}(z)$ and (4), $d \leq 0$ implies $S(m,n) = S(m,n,z)$ for every $z \in \mathbb{C}$.  Conversely, if $d > 0$, then $P_{(m,n)}(z)$ is of degree $d$ and by (4) $S(m,n) = S(m,n,z)$ holds for $z \in \mathbb{C}$ if, and only if, $z \in Z(P_{(m,n)})$.  This completes the proof.
\end{proof}
\begin{figure}[h]
	\centering
	\includegraphics[width=0.8\textwidth,natwidth=610,natheight=642]{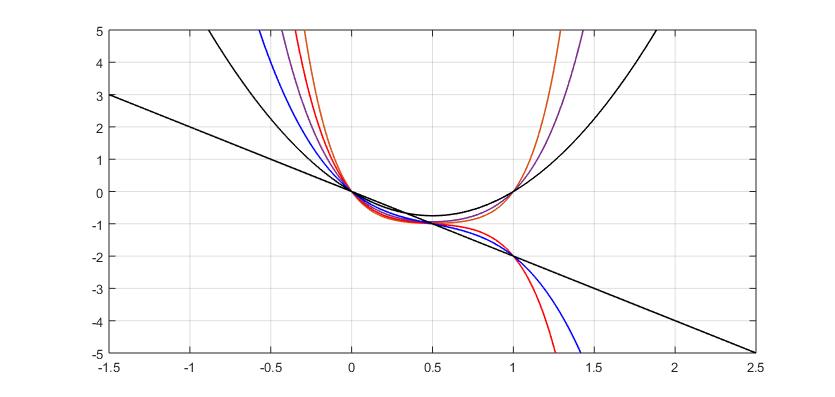}
	\caption{Plots of $P_{(m,1)}(z)$ for $2 \leq m \leq 7$.}
\end{figure}
In contrast to the case $d \leq 0$, we now have:
\begin{cor}
If $d > 0$, there are at most $d$ distinct complex numbers $z \in \mathbb{C}$ such that 
\[ S(m,n,z) = S(m,n). \]
\end{cor}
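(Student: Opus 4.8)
The plan is to read this off directly from Proposition 2. That proposition already identifies, in the case $d > 0$, the solution set of $S(m,n,z) = S(m,n)$ as exactly $Z(P_{(m,n)})$, the zero set of $P_{(m,n)}(z) = \sum_{j=1}^{d}{m \choose j}S(m-j,n)(-z)^{j}$. So the whole task reduces to bounding $|Z(P_{(m,n)})|$ by $d$, which follows as soon as one knows $P_{(m,n)}$ is a nonzero polynomial of degree $d$.

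The one point worth a line of verification is that the leading coefficient of $P_{(m,n)}$ does not vanish. The coefficient of $z^{d}$ in $P_{(m,n)}(z)$ is ${m \choose d}S(m-d,n)(-1)^{d}$; since $d = m-n$ we have $m-d = n$, and $S(n,n) = 1$, so this coefficient equals $(-1)^{d}{m \choose d}$. Because $0 < d \le m$, the binomial coefficient ${m \choose d}$ is a positive integer, hence nonzero, and therefore $\deg P_{(m,n)} = d$ exactly.

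With that in hand the corollary is immediate: a nonzero univariate polynomial of degree $d$ over $\mathbb{C}$ has at most $d$ distinct roots, so $|Z(P_{(m,n)})| \le d$, and by Proposition 2 this set coincides with $\{z \in \mathbb{C} : S(m,n,z) = S(m,n)\}$, giving the stated bound.

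I do not anticipate any genuine obstacle here; the only substantive ingredient beyond Proposition 2 is the normalization $S(n,n) = 1$, which is what pins the degree at exactly $d$ rather than something smaller. It seems worth recording this exact-degree statement explicitly, since the subsequent discussion of the real roots of $P_{(m,n)}$ and their simplicity implicitly relies on $P_{(m,n)}$ being a bona fide degree-$d$ polynomial.
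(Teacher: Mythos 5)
Your proposal is correct and follows essentially the same route as the paper: invoke the preceding proposition to identify the solution set with $Z(P_{(m,n)})$, note that $\deg(P_{(m,n)}) = d$ when $d>0$, and conclude by the Fundamental Theorem of Algebra. Your explicit check that the leading coefficient is $(-1)^{d}\binom{m}{d}S(n,n) = (-1)^{d}\binom{m}{d} \neq 0$ is a worthwhile verification that the paper leaves implicit, but it does not change the argument.
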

\begin{proof}
Noting that $d > 0$ implies $\mathrm{deg}(P_{(m,n)}) = d$, the Corollary follows by the Fundamental Theorem of Algebra.
\end{proof}
\noindent
\textbf{Remark 1.}  In view of the definition of $P_{(m,n)}(z)$, $z = 0$ is a root of this polynomial whenever $d > 0$.  Proposition 1 then implies that $S(m,n,0) = S(m,n)$ for every $m,n \in \mathbb{Z}_{+}$.  Now if $d \in \mathbb{E}$, we have that
\[ S(m,n,n) = \frac{1}{n!}\sum_{k=0}^{n}{n \choose k}(n-k)^{m} = S(m,n) \]
by Theorem 1.  Thus, $P_{(m,n)}(n) = 0$ whenever $d \in \mathbb{E}$ by equation (4).
\\[12pt]
\indent
The next series of Propositions provides the calculation of $Z_{\mathbb{R}}(P_{(m,n)})$.
\begin{prop}
If $d > 0$, then the following assertions hold:
\begin{eqnarray}
\mbox{$\mathrm{(A)}$ $d \in \mathbb{O}$ implies $z = 0$ is a simple root of $P_{(m,n)}(z)$.} \hspace{42.5mm} & & \nonumber \\ \nonumber
\mbox{$\mathrm{(B)}$ $d \in \mathbb{E}$ implies $z = 0$ and $z = n$ are simple roots of $P_{(m,n)}(z)$.} \hspace{25.75mm} \nonumber \\ \nonumber
\mbox{$\mathrm{(C)}$ All real roots of $P_{(m,n)}(z)$ lie in $[0,n]$.} \hspace{61mm} & &  \nonumber
\end{eqnarray}
\end{prop}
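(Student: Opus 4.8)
The plan is to reduce everything to the identity $P_{(m,n)}(z) = S(m,n,z) - S(m,n)$ from equation (4), together with a reflection symmetry of the Stirling functions. First I would establish that
\[ S(m,n,n-z) = (-1)^{d}\,S(m,n,z) \qquad (z \in \mathbb{C}). \]
This follows by substituting $k \mapsto n-k$ in the defining sum of $S(m,n,n-z)$, extracting the factor $(-1)^{m}$ from $(k-z)^{m} = (-1)^{m}(z-k)^{m}$, and noting that the accumulated sign $(-1)^{d+n+m}$ equals $1$ because $d+n = m$. Via (4) this yields $P_{(m,n)}(n-z) = (-1)^{d}P_{(m,n)}(z) + ((-1)^{d}-1)S(m,n)$; in particular $P_{(m,n)}(n-z) = P_{(m,n)}(z)$ when $d \in \mathbb{E}$.

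Granting this, (A) and the $z=0$ half of (B) are immediate from the shape of $P_{(m,n)}(z) = \sum_{j=1}^{d}{m \choose j}S(m-j,n)(-z)^{j}$: its constant term vanishes, so $P_{(m,n)}(0) = 0$, while its linear coefficient is $-m\,S(m-1,n)$, which is nonzero because $m \geq 2$ and $m-1 \geq n \geq 1$ give $S(m-1,n) \geq 1$. Hence $z = 0$ is a simple root regardless of the parity of $d$. For the remaining part of (B), Remark 1 gives $P_{(m,n)}(n) = 0$ when $d \in \mathbb{E}$, and differentiating $P_{(m,n)}(z) = P_{(m,n)}(n-z)$ at $z = n$ gives $P_{(m,n)}'(n) = -P_{(m,n)}'(0) = m\,S(m-1,n) \neq 0$, so $z = n$ is also simple. (One may instead verify $P_{(m,n)}'(z) = -m\,S(m-1,n,z)$ and compute $S(m-1,n,n) = (-1)^{d-1}S(m-1,n)$ from Theorem 1.)

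For (C), I would first observe that when $z < 0$ every summand of $\sum_{j=1}^{d}{m \choose j}S(m-j,n)(-z)^{j}$ is strictly positive — $-z > 0$, ${m \choose j} > 0$, and $S(m-j,n) \geq 1$ since $1 \leq j \leq d$ forces $m-j \geq n \geq 1$ — so $P_{(m,n)}$ has no real root below $0$. For roots above $n$: if $z_{0} > n$ satisfied $P_{(m,n)}(z_{0}) = 0$, the reflection identity would give $P_{(m,n)}(n-z_{0}) = ((-1)^{d}-1)S(m,n) \leq 0$, contradicting the strict positivity of $P_{(m,n)}$ at the negative number $n - z_{0}$. Thus every real root lies in $[0,n]$.

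The one delicate step is the reflection identity itself: the exponent bookkeeping — the $(-1)^{k}$ in the summand, the $(-1)^{m}$ produced by the reindexing, and the parity of $d+n+m$ — must be handled carefully. After that the argument is short: the positivity of $P_{(m,n)}$ on $(-\infty,0)$ is read straight off the definition, the reflection sends $(n,\infty)$ into $(-\infty,0)$ so both excluded regions are disposed of at once, and the simplicity assertions are one-line evaluations of $P_{(m,n)}$ and $P_{(m,n)}'$ at $0$ and (via the symmetry) at $n$.
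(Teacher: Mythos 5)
Your proof is correct, and it reaches the paper's conclusions by a noticeably more self-contained route. The paper's own argument runs through Gould's identity $\sum_{k=0}^{n}\binom{n}{k}(-1)^{k}(z-k)^{m}=\sum_{j=0}^{d}\binom{z-n}{j}B(m,n+j)$ to re-expand $P_{(m,n)}$ about $z=n$, evaluates the derivatives $P^{(j)}_{(m,n)}(0)$ and $P^{(j)}_{(m,n)}(n)$ via Theorem 1 to get (A) and (B), and only then, by matching the two expansions, extracts the convolution identity $\sum_{q=j}^{d}\binom{n+q}{n}S(m,n+q)s(q,j)=\binom{m}{j}S(m-j,n)$, which converts the expansion at $n$ into the reflection relation (the paper's equation (9)) needed for (C). You instead prove the reflection $S(m,n,n-z)=(-1)^{d}S(m,n,z)$ at the outset by the one-line reindexing $k\mapsto n-k$ (your sign bookkeeping is right: the accumulated exponent $n+m\equiv d \pmod 2$), pass to $P_{(m,n)}(n-z)=(-1)^{d}P_{(m,n)}(z)+((-1)^{d}-1)S(m,n)$ via equation (4), and then everything else is reading coefficients: $P_{(m,n)}(0)=0$ with $P'_{(m,n)}(0)=-mS(m-1,n)\neq 0$ (so $z=0$ is simple for every $d>0$, which subsumes (A) and half of (B)), $P'_{(m,n)}(n)=-P'_{(m,n)}(0)\neq 0$ for even $d$ by the symmetry, and (C) from strict positivity of $P_{(m,n)}$ on $(-\infty,0)$ together with the reflection sending $(n,\infty)$ into $(-\infty,0)$. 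The endgame (positivity plus symmetry) is the same as the paper's, but you avoid Gould's formula and the Stirling-number convolution entirely; what the paper's longer detour buys is precisely that convolution identity (8) as a by-product, whereas your derivation delivers the reflection identity (and hence Proposition 3's symmetry) immediately and more elementarily. There is no circularity in your citations: Proposition 1, equation (4), and Remark 1 all precede this statement.
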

\begin{proof}
Note that by a formula due to Gould [3, Eqn. 2.57], we have
\[ \sum_{k=0}^{n}{n \choose k}(-1)^{k}(z-k)^{m} = \sum_{j=0}^{d}{z-n \choose j}B(m,n+j). \]
Now by the above and equation (4), we obtain an expansion of $P_{(m,n)}(z)$ at $z = n$:
\begin{eqnarray}
P_{(m,n)}(z) & = & \frac{(-1)^{d}}{n!}\sum_{j=0}^{d}{z-n \choose j}B(m,n+j) - S(m,n) \nonumber \\ \nonumber
			 & = & (-1)^{d}\sum_{j=1}^{d}{n+j \choose n}S(m,n+j)(z-n)_{j} + ((-1)^{d}-1)S(m,n) \nonumber \\
			 & = & (-1)^{d}\sum_{j=1}^{d}\bigg[\sum_{q=j}^{d}{n + q \choose n}S(m,n+q)s(q,j) \bigg](z-n)^{j} + ((-1)^{d}-1)S(m,n).
\end{eqnarray}
Let $1 \leq j \leq d$.  We differentiate each side of (4) to get
\begin{equation}
P^{(j)}_{(m,n)}(z) = \frac{(-1)^{d}(m)_{j}}{n!}\sum_{k=0}^{n}{n \choose k}(-1)^{k}(z-k)^{m-j}.
\end{equation}
We have by (6) and Theorem 1
\begin{eqnarray}
P^{(j)}_{(m,n)}(0) = (-1)^{j}(m)_{j}S(m-j,n), \hspace{5mm} P^{(j)}_{(m,n)}(n) = (-1)^{d}(m)_{j}S(m-j,n)
\end{eqnarray}
hence (A) and (B) follow by Remark 1 and (7).  Now, notice that applying (7) to (5) yields the convolution identity
\begin{equation}
\sum_{q=j}^{d}{n + q \choose n}S(m,n+q)s(q,j) = {m \choose j}S(m-j,n) \hspace{5mm} (1 \leq j \leq d).
\end{equation}
Observing that $P_{(m,n)}(z) > 0$ if $z < 0$, applying (8) to (5) yields 
\[   z \in (-\infty,0) \cup (n,\infty) \Rightarrow |P_{(m,n)}(z)| > 0. \]  
Assertion (C) is now established, and the proof is complete.
\end{proof}
As can be seen above, by (5) and (8) we have that
\begin{eqnarray}
 P_{(m,n)}(z) & = & \sum_{j=1}^{d}{m \choose j}S(m-j,n)(-z)^{j} \nonumber \\ 
		      & = & (-1)^{d}P_{(m,n)}(n-z) + ((-1)^{d}-1)S(m,n).
\end{eqnarray}
Therefore, by (4) and (9), one obtains through successive differentiation:
\begin{prop}
Let $d > 0$ and $k \in \mathbb{Z}_{+}$.  Then, we have that
\[  S^{(k)}(m,n,z) = P^{(k)}_{(m,n)}(z) = (-1)^{d-k}P^{(k)}_{(m,n)}(n-z) = (-1)^{d-k}S^{(k)}(m,n,n-z). \]
\end{prop}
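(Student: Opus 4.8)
The plan is to obtain all three equalities by differentiating, $k$ times in $z$, the two identities already established above. First, rewriting equation (4) as $S(m,n,z) = S(m,n) + P_{(m,n)}(z)$ and differentiating $k \geq 1$ times annihilates the constant $S(m,n)$, yielding at once $S^{(k)}(m,n,z) = P^{(k)}_{(m,n)}(z)$ (both sides being polynomials in $z$, there is no issue about differentiability). Applying this same identity with the argument $n-z$ in place of $z$ simultaneously settles the last equality, $(-1)^{d-k}P^{(k)}_{(m,n)}(n-z) = (-1)^{d-k}S^{(k)}(m,n,n-z)$. So the only thing left to prove is the middle equality $P^{(k)}_{(m,n)}(z) = (-1)^{d-k}P^{(k)}_{(m,n)}(n-z)$.

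For that I would differentiate equation (9), $P_{(m,n)}(z) = (-1)^{d}P_{(m,n)}(n-z) + ((-1)^{d}-1)S(m,n)$, $k$ times with respect to $z$. Since $k \geq 1$, the constant term $((-1)^{d}-1)S(m,n)$ drops out, and the chain rule contributes a factor $-1$ at each step to the composite $P_{(m,n)}(n-z)$, so that $\frac{d^{k}}{dz^{k}}P_{(m,n)}(n-z) = (-1)^{k}P^{(k)}_{(m,n)}(n-z)$. Hence $P^{(k)}_{(m,n)}(z) = (-1)^{d+k}P^{(k)}_{(m,n)}(n-z)$, and since $(-1)^{d+k} = (-1)^{d-k}$ this is precisely the middle equality. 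Chaining the three pieces together finishes the proof.

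There is no genuine obstacle here: the argument is entirely mechanical once (4) and (9) are in hand. The only points demanding a modicum of care are insisting that the differentiation order satisfy $k \geq 1$, so that the constant terms $S(m,n)$ and $((-1)^{d}-1)S(m,n)$ vanish, and the trivial parity observation that $(-1)^{d+k}$ and $(-1)^{d-k}$ coincide. As an alternative to leaning on (9), one could instead start from the explicit formula (6) for $P^{(k)}_{(m,n)}(z)$ and combine the reflection $z \mapsto n-z$ with the invariance of $\binom{n}{k}$ under complementation of the summation index, but invoking (9) directly is shorter and avoids the reindexing bookkeeping.
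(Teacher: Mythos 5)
Your proposal is correct and follows exactly the paper's route: the paper obtains Proposition 3 precisely by successive differentiation of equations (4) and (9), with the constants vanishing and the chain rule supplying the factor $(-1)^{k}$, just as you argue. No gaps; the parity remark $(-1)^{d+k}=(-1)^{d-k}$ is the only subtlety and you handle it.
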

\noindent
Thus, the derivatives of $P_{(m,n)}(z)$ and $S(m,n,z)$ are symmetric about the point $z = n/2$.
\begin{figure}[h]
	\centering
	\includegraphics[width=0.8\textwidth,natwidth=610,natheight=642]{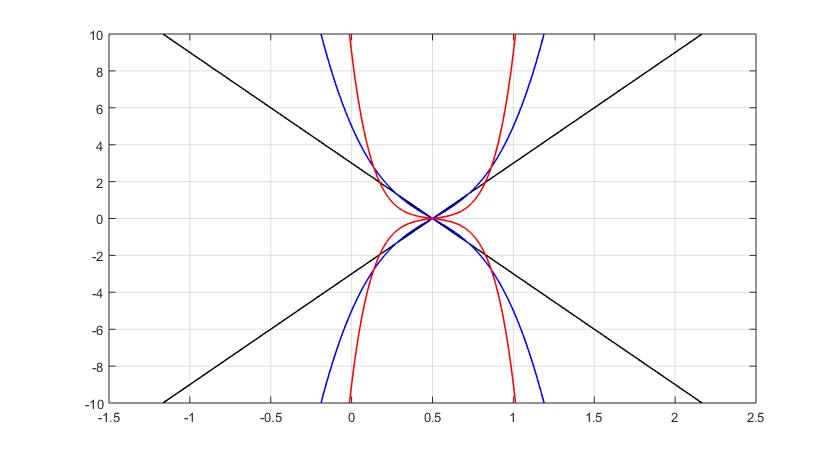}
	\caption{Plots of $\protect P'_{(m,1)}(z)$ and $\protect P'_{(m,1)}(1-z)$ for $m = 3,5,9$.  Note the symmetry about $z = 1/2$.}
\end{figure}
\\
Further, the functions $S(m,n,z)$ have the following recursive properties:
\begin{prop}
Let $m,n \geq 2$, $d > 0$ and $1 \leq k \leq d+1$.  Then, we have:
\[ \mathrm{(A)} \hspace{1mm} S(m,n,z) = S(m-1,n-1,z-1) - zS(m-1,n,z) \hspace{45mm} \]
\[ \mathrm{(B)} \hspace{1mm} S^{(k)}(m,n,z) = (-1)^{k}(m)_{k}S(m-k,n,z). \hspace{63mm} \]
\end{prop}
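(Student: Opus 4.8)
Both identities come straight out of the defining formula $S(m,n,z) = \frac{(-1)^{d}}{n!}\sum_{\ell=0}^{n}\binom{n}{\ell}(-1)^{\ell}(z-\ell)^{m}$, so the plan is to manipulate this sum directly; the only subtlety is keeping track of how the sign prefactor $(-1)^{d}$ responds when a parameter is decremented (lowering $n$ leaves $d = m-n$ fixed, lowering $m$ flips the sign). One uses throughout that $S(m',n',z)$ is given by the same formula with $d'=m'-n'$, and that in the stated range $1 \le k \le d+1$ one has $m-k \ge n-1 \ge 1$ (using $m,n \ge 2$), so every Stirling function appearing has positive integer parameters; when $k = d+1$ both sides of (B) vanish, consistent with $\deg P_{(m,n)} = d$ and $S(n-1,n,z) = S(n-1,n) = 0$.

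For (B) I would differentiate the defining sum term by term $k$ times: each $(z-\ell)^{m}$ becomes $(m)_{k}(z-\ell)^{m-k}$, whence $S^{(k)}(m,n,z) = \frac{(-1)^{d}(m)_{k}}{n!}\sum_{\ell=0}^{n}\binom{n}{\ell}(-1)^{\ell}(z-\ell)^{m-k}$. The remaining sum is, by definition, $(-1)^{(m-k)-n}\,n!\,S(m-k,n,z) = (-1)^{d-k}\,n!\,S(m-k,n,z)$ (the second parameter is still $n$, so the factorials match). Combining the signs, $(-1)^{d}(-1)^{d-k} = (-1)^{k}$, gives (B). This is the same computation as equations (6)--(7), and since (4) gives $S(m,n,z) = S(m,n) + P_{(m,n)}(z)$, it also records that $S^{(k)}(m,n,z) = P^{(k)}_{(m,n)}(z)$ for $k \ge 1$.

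For (A) I would expand $(z-\ell)^{m} = z(z-\ell)^{m-1} - \ell(z-\ell)^{m-1}$ inside the defining sum for $S(m,n,z)$ and treat the two pieces separately. The $z(z-\ell)^{m-1}$ piece gives $z(-1)^{d}\cdot\frac{1}{n!}\sum_{\ell}\binom{n}{\ell}(-1)^{\ell}(z-\ell)^{m-1} = z(-1)^{d}(-1)^{-(d-1)}S(m-1,n,z) = -zS(m-1,n,z)$. For the $\ell(z-\ell)^{m-1}$ piece I would apply $\ell\binom{n}{\ell} = n\binom{n-1}{\ell-1}$, drop the $\ell = 0$ term, and reindex $\ell \mapsto j+1$; after pulling $n$ into $n!$ and absorbing the sign from $(-1)^{\ell} = -(-1)^{j}$, this piece becomes $\frac{(-1)^{d}}{(n-1)!}\sum_{j=0}^{n-1}\binom{n-1}{j}(-1)^{j}\big((z-1)-j\big)^{m-1} = S(m-1,n-1,z-1)$, the prefactor being $(-1)^{d}$ since $(m-1)-(n-1) = d$. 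Adding the two pieces yields (A).

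There is no conceptual obstacle here beyond the sign and index bookkeeping just described. As a consistency check, clearing the factorial in (A) reduces it to the polynomial identity $\sum_{\ell=0}^{n}\binom{n}{\ell}(-1)^{\ell}(z-\ell)^{m} = n\sum_{\ell=0}^{n-1}\binom{n-1}{\ell}(-1)^{\ell}(z-1-\ell)^{m-1} + z\sum_{\ell=0}^{n}\binom{n}{\ell}(-1)^{\ell}(z-\ell)^{m-1}$, and evaluating this at $z = n$ (where all three sums are computed by Theorem 1) recovers the classical recurrence $S(m,n) = S(m-1,n-1) + nS(m-1,n)$, of which (A) is the functional lift.
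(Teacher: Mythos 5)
Your proposal is correct and follows essentially the same route as the paper: (A) via the split $(z-\ell)^m = z(z-\ell)^{m-1} - \ell(z-\ell)^{m-1}$ with the absorption $\ell\binom{n}{\ell} = n\binom{n-1}{\ell-1}$ and reindexing, and (B) by term-by-term differentiation of the defining sum and matching signs with the definition of $S(m-k,n,z)$. Your extra sign bookkeeping and the consistency checks (the $k=d+1$ case and recovering $S(m,n)=S(m-1,n-1)+nS(m-1,n)$ at $z=n$) are sound but not needed beyond what the paper records.
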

\begin{proof}
It is easily verified that
\begin{eqnarray}
S(m,n,z) & = & \frac{(-1)^{d}}{n!}\bigg(z\sum_{k=0}^{n}{n \choose k}(-1)^{k}(z-k)^{m-1} + \sum_{k=0}^{n}\frac{n!(-1)^{k+1}(z-k)^{m-1}}{(k-1)!(n-k)!}\bigg) \nonumber \\ \nonumber
& = & -zS(m-1,n,z) + \frac{(-1)^{d}}{(n-1)!}\sum_{k=0}^{n-1}{n-1 \choose k}(-1)^{k}(z-1-k)^{m-1} \nonumber \\ \nonumber
& = & -zS(m-1,n,z) + S(m-1,n-1,z-1)
\end{eqnarray}
which establishes (A).  To obtain (B), differentiate the Stirling function $S(m,n,z)$ $k$ times and apply the definition of $S(m-k,n,z)$.
\end{proof}
\noindent
\textbf{Remark 2.} Let $d > 0$ and $k \in \mathbb{Z}_{+}$.  By Propositions 3 and 4B, we have that
\begin{equation}
(d-k) \in \mathbb{O} \Rightarrow P^{(k)}_{(m,n)}(n/2) = 0 = S(m-k,n,n/2).
\end{equation}
Now suppose $(d-k) \in \mathbb{E}$. In this case, Propositions 3 and 4B do not directly reveal the value of $P^{(k)}_{(m,n)}(n/2)$.  However, combined they imply a result concerning the sign (and more importantly, the absolute value) of $P^{(k)}_{(m,n)}(z)$ if $z \in \mathbb{R}$.  Consider that if $d = m-1$,
\[ [S(m-k,1,z) = z^{m-k} - (z-1)^{m-k} > 0] \Leftrightarrow [z > z-1] \hspace{5mm} (z \in \mathbb{R})  \]
since $(m-k) \in \mathbb{O}$.  Proceeding inductively, we obtain:
\begin{prop}
Suppose $d \in \mathbb{E}$. Then, $S(m,n,z) > 0$ holds for every $z \in \mathbb{R}$.
\end{prop}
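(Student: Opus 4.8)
The plan is to prove, by induction on $d$, a two‑part strengthening, valid for all $m,n$ with $m-n=d>0$: if $d$ is \emph{even} then $S(m,n,z)>0$ for every $z\in\mathbb{R}$ (this is the present Proposition), while if $d$ is \emph{odd} then $S(m,n,z)$ has the same sign as $n/2-z$, vanishing exactly at $z=n/2$. The base case $d=1$ is immediate from equation (4) and the definition of $P_{(m,n)}$: one gets $S(m,n,z)=S(n+1,n)-(n+1)z=(n+1)\bigl(\tfrac n2-z\bigr)$ (using $S(n+1,n)=\binom{n+1}{2}$), already of the desired form.

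For the inductive step with $d\ge 3$ \emph{odd}: when $n=1$ the identity $S(m,1,z)=(-1)^{m-1}\bigl(z^{m}-(z-1)^{m}\bigr)=-\bigl(z^{m}-(z-1)^{m}\bigr)$ (here $m$ is even) settles the claim directly, since for even $m$ the quantity $z^{m}-(z-1)^{m}=|z|^{m}-|z-1|^{m}$ has the sign of $2z-1$. When $n\ge 2$, Proposition 4B gives $S'(m,n,z)=-m\,S(m-1,n,z)$; the pair $(m-1,n)$ carries the even index $d-1\ge 2$, so the even case of the induction hypothesis yields $S(m-1,n,z)>0$ for all real $z$, hence $S(m,n,\cdot)$ is strictly decreasing on $\mathbb{R}$. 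Since equations (4) and (9) give the reflection $S(m,n,z)=(-1)^{d}S(m,n,n-z)=-S(m,n,n-z)$, forcing $S(m,n,n/2)=0$, the sign statement follows at once.

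The even case $d\ge 2$ is the heart of the matter, and here I run a \emph{second} induction, on $n$. For $n=1$, $S(m,1,z)=z^{m}-(z-1)^{m}>0$ because $t\mapsto t^{m}$ is strictly increasing for the odd exponent $m=d+1$. For $n\ge 2$: again $S'(m,n,z)=-m\,S(m-1,n,z)$ by Proposition 4B, but now $(m-1,n)$ carries the odd index $d-1$, so by the odd case of the (outer) induction hypothesis $S(m-1,n,z)$ has the sign of $n/2-z$; consequently $S(m,n,\cdot)$ decreases on $(-\infty,n/2]$ and increases on $[n/2,\infty)$, so its global minimum is attained at $z=n/2$. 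It remains to show $S(m,n,n/2)>0$. Evaluating Proposition 4A at $z=n/2$ and using $S(m-1,n,n/2)=0$ (again from the reflection identity for the pair $(m-1,n)$, whose index $d-1$ is odd) collapses this to $S(m,n,n/2)=S\bigl(m-1,\,n-1,\,\tfrac n2-1\bigr)$; but $(m-1,n-1)$ carries the \emph{even} index $d$ with strictly smaller second argument $n-1$, so the inner induction hypothesis gives $S\bigl(m-1,n-1,\tfrac n2-1\bigr)>0$. Therefore $S(m,n,z)\ge S(m,n,n/2)>0$ for every $z$, and the induction closes.

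The step I expect to be the main obstacle is exactly the bound $S(m,n,n/2)>0$ in the even case: this value admits no convenient closed form, so the crucial move is not to evaluate it but, via one application of the recursion in Proposition 4A (after observing $S(m-1,n,n/2)=0$), to reduce it to the value of a Stirling function with the \emph{same} even index $d$ but strictly smaller $n$ --- which is what dictates the nested induction (outer on $d$, alternating the even and odd statements; inner on $n$ within the even case). Finally, one should note that the odd‑index statement is genuinely indispensable here: merely knowing $S(m,n,n/2)>0$ does not yield the Proposition, since pinning the global minimum of $S(m,n,\cdot)$ at $n/2$ depends on knowing that $S(m-1,n,\cdot)$ changes sign exactly once, precisely at $n/2$.
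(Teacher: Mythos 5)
Your argument is correct, and it is a genuinely different organization from the paper's. The paper runs a single induction on $n$: it Taylor-expands $S(m,n,z)$ about the midpoint $z=n/2$, kills all odd-order coefficients by the symmetry in Proposition 3 (equation (10)), and shows every even-order coefficient is strictly positive via Propositions 4A, 4B and the induction hypothesis, so the function is a sum of positive multiples of even powers of $\bigl(z-\tfrac n2\bigr)$. You instead prove a strengthened two-part statement by an outer induction on $d$ (positivity for even $d$; sign equal to that of $\tfrac n2-z$, with the unique zero at $\tfrac n2$, for odd $d$), with an inner induction on $n$ inside the even case: the first derivative $S'(m,n,z)=-m\,S(m-1,n,z)$ together with the odd-index sign statement pins the global minimum at $z=n/2$, and one application of Proposition 4A (using $S(m-1,n,n/2)=0$) reduces the minimum value to $S(m-1,n-1,\tfrac n2-1)>0$ at the same even $d$ and smaller $n$. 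The key midpoint reduction $S(m,n,\tfrac n2)=S\bigl(m-1,n-1,\tfrac n2-1\bigr)$ is exactly the paper's $j=0$ Taylor coefficient, so the two proofs share their crucial computation; what differs is how global positivity is extracted from it. The paper's expansion gives more (an explicit representation of $S(m,n,z)$ with positive coefficients, later exploited in (13)--(16)), at the cost of checking all even derivatives; your route needs only the first derivative but pays for it by carrying the odd-$d$ sign statement through a nested, lexicographic induction on $(d,n)$ --- a statement the paper gets essentially for free from Proposition 3 but never needs in this sharp form. Your housekeeping is in order: Proposition 4 is only invoked for $m,n\geq 2$, the $n=1$ and $d=1$ cases are checked directly, and the reflection $S(m,n,z)=-S(m,n,n-z)$ for odd $d$ follows from (4) and (9) as you say.
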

\begin{proof}
The Proposition clearly holds in the case $n = 1$.  If also for $n = N$, let $m$ be given which satisfies $(m-(N+1)) \in \mathbb{E}$.  Set $N+1 = N'$.  We expand $S(m,N',z)$ at $z = N'/2$ to obtain
\begin{equation}
S(m,N',z) = \sum_{j=0}^{m-N'}\frac{S^{(j)}(m,N',N'/2)}{j!}\bigg(z - \frac{N'}{2}\bigg)^{j}.
\end{equation}
Now, consider that by Propositions 4A and 4B we have that
\begin{eqnarray}
S^{(j)}\bigg(m,N',\frac{N'}{2}\bigg) & = & (-1)^{j}(m)_{j}S\bigg(m-j,N',\frac{N'}{2}\bigg) \nonumber \\
									 & = & (-1)^{j}(m)_{j}\bigg[S\bigg(m-j-1,N,\frac{N'}{2}-1\bigg) - \frac{N'}{2}S\bigg(m-j-1,N',\frac{N'}{2}\bigg)\bigg]
\end{eqnarray}
for $0 \leq j \leq m-N'.$  Hence by (10), (12) and the induction hypothesis
\[ S^{(j)}\bigg(m,N',\frac{N'}{2}\bigg) = (-1)^{j}(m)_{j}S\bigg(m-j-1,N,\frac{N'}{2}-1\bigg) > 0   \hspace{5mm} (j \in \mathbb{N}\setminus\mathbb{O}, \hspace{1mm} j < m-N'-1) \]
\[ S^{(j)}\bigg(m,N',\frac{N'}{2}\bigg) = 0 \hspace{5mm} (j \in \mathbb{O},\hspace{1mm} j < m-N'). \]
and by Proposition 1
\[ S^{(m-N')}\bigg(m,N',\frac{N'}{2}\bigg) = (-1)^{m-N'}(m)_{m-N'}S\bigg(N',N',\frac{N'}{2}\bigg) = (m)_{m-N'} > 0. \]
Thus $S(m,N',z)$ may be written as
\[ S(m,N',z) = \sum_{j=0}^{\frac{m-N'}{2}}\frac{S^{(2j)}(m,N',N'/2)}{(2j)!}\bigg(z - \frac{N'}{2}\bigg)^{2j} \]
where each coefficient of the above expansion at $z = N'/2$ is positive.  Since $m$ is arbitrary, the Proposition follows by induction.
\end{proof}
\begin{figure}[h]
	\centering
	\includegraphics[width=0.8\textwidth,natwidth=610,natheight=642]{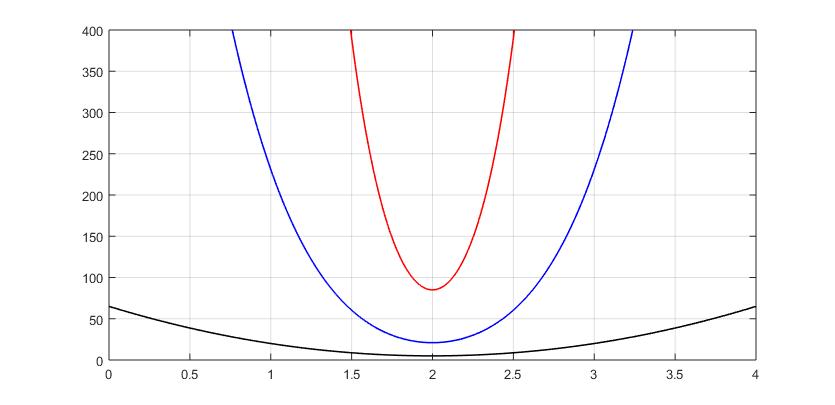}
	\caption{Plots of $\protect S(6,4,z)$, $\protect S(8,4,z)$ and $\protect S(10,4,z)$.  Note that each function achieves its global minimum (a positive value) at $z = 2$.}
\end{figure}
\begin{cor}
Let $k \in \mathbb{Z}_{+}$.  Then, $|P^{(k)}_{(m,n)}(z)| > 0$ holds for every $z \in \mathbb{R}$ if $(d-k) \in \mathbb{E}$.
\end{cor}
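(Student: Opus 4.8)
The plan is to recognize $P^{(k)}_{(m,n)}$ as a positive scalar multiple of a Stirling function whose ``defect'' parameter is the even number $d-k$, and then to quote Proposition 5. First I would observe that equation (4) asserts $S(m,n,z)-P_{(m,n)}(z)=S(m,n)$ is a constant, so differentiating $k\ge 1$ times gives $P^{(k)}_{(m,n)}(z)=S^{(k)}(m,n,z)$ for every $z$. Next I would invoke Proposition 4B (equivalently, formula (6) combined with the definition of $S(m-k,n,z)$, which yields the same identity regardless of whether $n\ge 2$) to write
\[ P^{(k)}_{(m,n)}(z) \;=\; S^{(k)}(m,n,z) \;=\; (-1)^{k}(m)_{k}\,S(m-k,n,z). \]

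The hypothesis $(d-k)\in\mathbb{E}$ then does the rest. It forces $d-k\ge 2$, hence $m-k\ge n+2\ge 3$; in particular $m-k\in\mathbb{Z}_{+}$, so $S(m-k,n,z)$ is a legitimate Stirling function, and $(m)_{k}=m(m-1)\cdots(m-k+1)>0$ since $m>n+k\ge k$. Moreover the defect of $S(m-k,n,z)$ is exactly $(m-k)-n=d-k\in\mathbb{E}$, so Proposition 5 applies to $S(m-k,n,\cdot)$ and gives $S(m-k,n,z)>0$ for all real $z$. Taking absolute values, $|P^{(k)}_{(m,n)}(z)|=(m)_{k}\,S(m-k,n,z)>0$ for every $z\in\mathbb{R}$, which is the claim.

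There is no deep obstacle here; the corollary is essentially a repackaging of Proposition 5, and the only care needed is the bookkeeping that makes the reductions legitimate: checking that $(d-k)\in\mathbb{E}$ guarantees $m-k$ is a positive integer exceeding $n$, that $k$ lies in the range where the derivative identity applies, and that the parity hypothesis transfers correctly to the shifted parameters. Any degenerate case left uncovered by the hypotheses of Proposition 4 (for instance $n=1$) can be dispatched directly, since $S(m,n,z)$ is an explicit finite sum whose derivatives are immediate to compute termwise. It is worth remarking that this statement is the ``even defect'' counterpart of Remark 2: when $(d-k)\in\mathbb{O}$, (10) shows $P^{(k)}_{(m,n)}$ vanishes at $z=n/2$, whereas when $(d-k)\in\mathbb{E}$ it never vanishes on the real line at all.
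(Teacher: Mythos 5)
Your proposal is correct and follows essentially the same route as the paper: identify $P^{(k)}_{(m,n)}(z)$ with $S^{(k)}(m,n,z)$ (Proposition 3, or equivalently differentiating (4)), apply Proposition 4B to write it as $(-1)^{k}(m)_{k}S(m-k,n,z)$, and invoke Proposition 5 since the shifted defect $(m-k)-n=d-k$ is even. The extra bookkeeping you include (positivity of $(m)_{k}$, the $n=1$ case of Proposition 4) is harmless and only makes explicit what the paper leaves implicit.
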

\begin{proof}
Assume the hypothesis.  By Propositions 3 and 4B, one obtains
\[ |P^{(k)}_{(m,n)}(z)| = (m)_{k}|S(m-k,n,z)|. \]
Noting $S(m-k,n,z) > 0$ if $z \in \mathbb{R}$ by Proposition 5, the Corollary is proven.
\end{proof}
\noindent
\textbf{Remark 3.}  We now calculate $Z_{\mathbb{R}}(P_{(m,n)})$ by Corollary 2 and the use of Rolle's Theorem.  Sharpening Corollary 1, Proposition 6 (below) asserts that there are at most two distinct real solutions of the equation $S(m,n,z) = S(m,n)$ if $d > 0$, dependent upon whether $d \in \mathbb{E}$ or $d \in \mathbb{O}$.  This result is in stark contrast to the Theorem of Ruiz, which has now been generalized to a complex variable (Proposition 1).
\begin{prop}
Let $d > 0$.  Then, $Z_{\mathbb{R}}(P_{(m,n)}) \subseteq \{0,n\}$.
\end{prop}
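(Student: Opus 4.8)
The plan is to bound the number of real roots of $P_{(m,n)}$ by Rolle's theorem, using Corollary 2 to control the derivatives and Proposition 2 to pin down which points of $[0,n]$ are actually roots. Recall that Proposition 2(C) already confines $Z_{\mathbb{R}}(P_{(m,n)})$ to $[0,n]$, that $z=0$ is a root whenever $d>0$ (Remark 1), and that $z=n$ is a root precisely when $d$ is even (Proposition 2(B)). So it suffices to show that $P_{(m,n)}$ has no real root beyond these, and the mechanism is the contrapositive of Rolle: a derivative with few real zeros forces $P_{(m,n)}$ itself to have few real zeros.

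First I would peel off the low-degree cases $d\le 2$. Since $\deg P_{(m,n)}=d$, such a polynomial has at most $d$ real roots, and by Proposition 2 these are already exhausted by $\{0\}$ when $d=1$ and by $\{0,n\}$ when $d=2$. For $d\ge 3$ odd, the integer $d-1\ge 2$ is even, so Corollary 2 applied with $k=1$ gives that $P'_{(m,n)}$ has no real zeros; being continuous and nowhere vanishing, $P'_{(m,n)}$ has constant sign, hence $P_{(m,n)}$ is strictly monotone on $\mathbb{R}$ and has at most one real root. Since $0$ is a root, $Z_{\mathbb{R}}(P_{(m,n)})=\{0\}\subseteq\{0,n\}$ in this case.

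For $d\ge 4$ even, $d-2\ge 2$ is even, so Corollary 2 with $k=2$ shows $P''_{(m,n)}$ has no real zeros, whence $P'_{(m,n)}$ is strictly monotone and has at most one real root. If $P_{(m,n)}$ had three distinct real roots $a<b<c$, Rolle's theorem would produce zeros of $P'_{(m,n)}$ in $(a,b)$ and in $(b,c)$, giving two distinct real zeros of $P'_{(m,n)}$, a contradiction. Hence $P_{(m,n)}$ has at most two distinct real roots, and since $0$ and $n$ are both roots by Proposition 2(B), we conclude $Z_{\mathbb{R}}(P_{(m,n)})=\{0,n\}$. Assembling the cases yields $Z_{\mathbb{R}}(P_{(m,n)})\subseteq\{0,n\}$ in all cases.

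The one point requiring care, and the reason for isolating $d\in\{1,2\}$, is the hypothesis $(d-k)\in\mathbb{E}$ of Corollary 2: since $\mathbb{E}=2\mathbb{Z}_{+}$ excludes $0$, Corollary 2 gives nothing when $d-k=0$, which is exactly what would occur if one applied it with $k=1$ at $d=1$ or with $k=2$ at $d=2$; there the crude degree bound does the work instead. Otherwise the argument is routine, and it is worth noting that no appeal to Remark 2 or to the exact location $n/2$ of the zero of $P'_{(m,n)}$ (in the $d$ even case) is needed, since we only ever need to \emph{count} the real zeros of $P'_{(m,n)}$, not locate them.
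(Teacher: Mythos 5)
Your argument is correct and is essentially the paper's own proof: dispose of $d\le 2$ via Proposition 2 and the degree bound, then for odd $d>2$ apply Corollary 2 with $k=1$ to make $P_{(m,n)}$ strictly monotone, and for even $d>2$ apply it with $k=2$ so that $P'_{(m,n)}$ has at most one real zero and Rolle's Theorem caps the real roots of $P_{(m,n)}$ at two, already accounted for by $0$ and $n$. Your explicit remark about why $d\in\{1,2\}$ must be separated (the hypothesis $(d-k)\in\mathbb{E}$ fails when $d-k=0$) is exactly the reason the paper reduces to $d>2$ at the outset.
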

\begin{proof}
By Proposition 2, we may assume $d > 2$.  If $d \in \mathbb{E}$, Corollary 2 implies that 
\[ |P^{(2)}(m,n)(z)| > 0 \hspace{5mm} (z \in \mathbb{R}).  \] 
Hence $|Z_{\mathbb{R}}(P'_{(m,n)})| \leq 1$.  Proposition 2 now gives $Z_{\mathbb{R}}(P_{(m,n)}) = \{0,n\}$ (for otherwise, Rolle's Theorem assures $|Z_{\mathbb{R}}(P'_{(m,n)})| > 1$). Now if $d \in \mathbb{O}$, Corollary 2 yields
\[ |P'_{(m,n)}(z)| > 0 \hspace{5mm} (z \in \mathbb{R}) \]
and thus $|Z_{\mathbb{R}}(P_{(m,n)})| \leq 1$.  We now conclude by Proposition 2 that $Z_{\mathbb{R}}(P_{(m,n)}) = \{0\}$, which completes the proof.
\end{proof}
\begin{cor}
If $d > 0$, the only possible real solutions of
\[ S(m,n,z) = S(m,n) \]
are $z = 0$ and $z = n$.  Moreover, for $d > 2$ there exist $z \in \mathbb{C}\setminus\mathbb{R}$ which satisfy the above.
\end{cor}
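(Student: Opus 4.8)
The plan is to obtain the first assertion as an immediate consequence of Propositions 1 and 6, and the second by a degree count built on the Fundamental Theorem of Algebra together with the simplicity of the real roots. For the first part, recall that when $d>0$ Proposition 1 says $S(m,n,z)=S(m,n)$ holds for $z\in\mathbb{C}$ precisely when $z\in Z(P_{(m,n)})$; restricting to $z\in\mathbb{R}$ this becomes $z\in Z_{\mathbb{R}}(P_{(m,n)})$, and Proposition 6 confines that set to $\{0,n\}$. Hence $z=0$ and $z=n$ are the only possible real solutions. (One may add, via Remark 1, that $z=0$ is always an actual solution and $z=n$ is one exactly when $d\in\mathbb{E}$, but the statement only concerns which reals are \emph{possible} solutions.)

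For the second part I would argue as follows. Suppose $d>2$. Since $d>0$, Proposition 1 gives $\deg(P_{(m,n)})=d$, so by the Fundamental Theorem of Algebra $P_{(m,n)}$ has exactly $d$ roots in $\mathbb{C}$ counted with multiplicity. By Proposition 6 every real root lies in $\{0,n\}$, and by Proposition 2(A),(B) each such root is simple; therefore the real roots account for at most $2$ of the $d$ roots (for at most $1$ of them when $d\in\mathbb{O}$). As $d>2$, there remain at least $d-2\geq 1$ roots in $\mathbb{C}\setminus\mathbb{R}$, and by Proposition 1 each such root $z$ satisfies $S(m,n,z)=S(m,n)$. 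This proves the Corollary.

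The whole argument is bookkeeping on top of Propositions 1, 2, and 6, so there is no substantial obstacle; the one point that genuinely must be used is that the real roots are counted with multiplicity $1$ — precisely the simplicity assertions of Proposition 2 — since without them the degree count alone would not force a non-real root. If one wished, the bound ``at least $d-2$'' could be sharpened by noting that non-real roots of a real polynomial come in conjugate pairs, so in fact there are at least two of them whenever $d>2$ (and at least $d-1$ when $d\in\mathbb{O}$), but this refinement is not needed for the statement as written.
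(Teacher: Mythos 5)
Your proof is correct and follows essentially the same route as the paper: the first assertion from Propositions 1 and 6, and the second from the degree bound $\deg(P_{(m,n)})=d>2$ together with the Fundamental Theorem of Algebra and the confinement of the real roots to $\{0,n\}$. The only difference is cosmetic --- you spell out explicitly the multiplicity bookkeeping (that the real roots are simple, via Proposition 2), which the paper invokes implicitly by citing Propositions 2 and 6 without elaboration.
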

\begin{proof}
The first assertion is a consequence of Propositions 1 and 6.  Now without loss, assume $d > 2$.  By Propositions 2 and 6, there are at most two real roots of $P_{(m,n)}(z)$.  Since we have that $\mathrm{deg}(P_{(m,n)}) > 2$, by the Fundamental Theorem of Algebra we obtain $Z_{\mathbb{R}}(P_{(m,n)}) \subsetneq Z(P_{(m,n)})$  which implies the existence of $z \in \mathbb{C}\setminus\mathbb{R}$ such that $P_{(m,n)}(z) = 0$. The Corollary now follows by Proposition 1.
\end{proof}
\section{Some Divisibility Properties of the Stirling Numbers of the Second Kind}
Let $d > 0$.  By (10), we expand the Stirling functions $S(m,n,z)$ at $z = n/2$ as follows:
\begin{eqnarray}
d \in \mathbb{E} \Rightarrow S(m,n,z) & = & \sum_{j=0}^{d/2}{m \choose 2j}S\bigg(m-2j,n,\frac{n}{2}\bigg)\bigg(z - \frac{n}{2}\bigg)^{2j}  \\ 
d \in \mathbb{O} \Rightarrow S(m,n,z) & = & -\sum_{j=0}^{\frac{d-1}{2}}{m \choose 2j + 1}S\bigg(m-2j-1,n,\frac{n}{2}\bigg)\bigg(z - \frac{n}{2}\bigg)^{2j+1}. 
\end{eqnarray}
Now if $d \in \mathbb{E}$, (13) and Proposition 5 imply that $S(m,n,z) \geq S(m,n,n/2) > 0$ for every $z \in \mathbb{R}$.  Conversely, if $d \in \mathbb{O}$, (14) implies that $Z_{\mathbb{R}}(S(m,n,z)) = \{n/2\}$ (apply similar reasoning as that used in Proposition 6).  Thus we introduce the numbers:
\[ v(m,n) := \min_{z \in \mathbb{R}}|S(m,n,z)|. \]
Taking $z=0$ in (13) and (14), it follows by Propositions 1 and 2 that
\begin{eqnarray}
d \in \mathbb{E} \Rightarrow S(m,n) & = & \sum_{j=0}^{d/2}{m \choose 2j}v(m-2j,n)\bigg(\frac{n}{2}\bigg)^{2j}  \\
d \in \mathbb{O} \Rightarrow S(m,n) & = & \sum_{j=0}^{\frac{d-1}{2}}{m \choose 2j + 1}v(m-2j-1,n)\bigg(\frac{n}{2}\bigg)^{2j+1}.
\end{eqnarray}
Using the formulas (15) and (16) combined with Proposition 7 (formulated below), we may deduce some divisibility properties of the numbers $S(m,n)$.  These include lower bounds for $\nu_{p}(S(m,n))$ if $d \in \mathbb{O}$ and $p \mid e(n)/2$, and an efficient means of calculating the parity of $S(m,n)$ if $d \in \mathbb{E}$.
\begin{figure}[H]
\centering
\includegraphics[width=0.8\textwidth,natwidth=610,natheight=642]{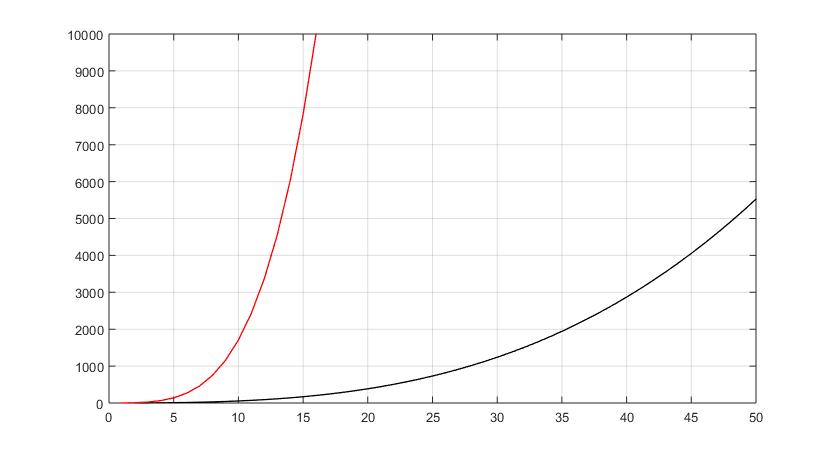}
\caption{An example of the difference in growth between the numbers $\protect v(n+2,n)$ (black) and $\protect S(n+2,n)$ (red) $(1 \leq n \leq 50)$.}
\end{figure}
\begin{prop}
Let $n \in \mathbb{E}$.  Then, $v(m,n) \in \mathbb{Z}$ whenever $d > 0$.
\end{prop}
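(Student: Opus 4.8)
The plan is to reduce the whole statement to one observation: because $n$ is even, $n/2$ is an integer, and $S(m,n,z)$ returns an integer at every integer value of $z$. I would first pin down where the minimum defining $v(m,n)$ is attained, reusing what has already been set up. When $d\in\mathbb{E}$, equation (13) together with Proposition 5 gives $S(m,n,z)\ge S(m,n,n/2)>0$ for every real $z$, so $v(m,n)=S(m,n,n/2)$. When $d\in\mathbb{O}$, equation (14) shows $S(m,n,n/2)=0$ and that $n/2$ is the only real zero of $S(m,n,\cdot)$, so $v(m,n)=0$. In either case $v(m,n)=|S(m,n,n/2)|$, so it suffices to prove $S(m,n,n/2)\in\mathbb{Z}$.

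For this I would invoke (4) and the (numbered) right-hand side of (5). Together they give
\[ (-1)^{d}\bigl(S(m,n,z)-S(m,n)\bigr)-\bigl((-1)^{d}-1\bigr)S(m,n)\;=\;\sum_{j=1}^{d}\Bigl[\sum_{q=j}^{d}\binom{n+q}{n}S(m,n+q)\,s(q,j)\Bigr](z-n)^{j}, \]
so that $S(m,n,\zeta)$ is an integer whenever $\zeta-n\in\mathbb{Z}$, since $s(q,j)$, $\binom{n+q}{n}$, $S(m,n+q)$ and $S(m,n)$ are all integers. Because $n\in\mathbb{E}$ we have $n/2\in\mathbb{Z}$, hence $\tfrac{n}{2}-n=-\tfrac{n}{2}\in\mathbb{Z}$; taking $\zeta=n/2$ then yields $S(m,n,n/2)\in\mathbb{Z}$, and therefore $v(m,n)=|S(m,n,n/2)|\in\mathbb{Z}$.

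I do not expect a genuine obstacle here: the analytic content — locating $\min_{z\in\mathbb{R}}|S(m,n,z)|$ — is already carried by Proposition 5 and the remarks preceding the statement, and the arithmetic identity I need is exactly equation (5), which was obtained earlier from Gould's formula. The only points requiring a little care are the routine bookkeeping of the sign $(-1)^{d}$ and the constant term when passing from (5) to "$S(m,n,\zeta)\in\mathbb{Z}$ for $\zeta-n\in\mathbb{Z}$", together with the elementary remark that a power (or falling factorial) of an integer is an integer — and it is precisely the hypothesis $n\in\mathbb{E}$ that makes $-n/2$ an integer. If one prefers to avoid (5), there is an alternative by induction on $m$ with $d=m-n>0$ even: formula (15) reads $S(m,n)=v(m,n)+\sum_{j=1}^{d/2}\binom{m}{2j}v(m-2j,n)(n/2)^{2j}$, where each $v(m-2j,n)$ with $1\le j<d/2$ is an integer by the induction hypothesis, the degenerate term $v(n,n)$ equals $S(n,n)=1$ by Ruiz's identity, and $(n/2)^{2j}\in\mathbb{Z}$; isolating $v(m,n)$ then exhibits it as $S(m,n)$ minus an integer.
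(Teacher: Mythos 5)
Your proof is correct, but your main route is genuinely different from the paper's. The paper argues by induction on even $d$ using (15): it solves for $v(n+d+2,n)$ as $S(n+d+2,n)$ minus the sum $\sum_{j=1}^{k+1}\binom{n+d+2}{2j}v(n+d-2(j-1),n)(n/2)^{2j}$, each term an integer by the induction hypothesis and $n\in\mathbb{E}$, with base case $v(n+2,n)=S(n+2,n)-\binom{n+2}{2}(n/2)^{2}$ (where $v(n,n)=1$ by Proposition 1) --- which is precisely the ``alternative'' you sketch in your last sentence. Your primary argument instead identifies $v(m,n)=|S(m,n,n/2)|$ (zero when $d\in\mathbb{O}$ by (14), the minimum value $S(m,n,n/2)$ when $d\in\mathbb{E}$ by (13) and Proposition 5, exactly as in the discussion preceding the statement) and then notes that $S(m,n,z)-S(m,n)=P_{(m,n)}(z)$ is a polynomial with integer coefficients, so its value at the integer point $n/2$ is an integer; this is a valid, non-inductive proof and arguably more direct than the paper's. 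In fact you can shortcut your own route: there is no need to pass through the Gould-based expansion (5) at $z=n$, since the defining expression $P_{(m,n)}(z)=\sum_{j=1}^{d}\binom{m}{j}S(m-j,n)(-z)^{j}$ from Proposition 1 already has manifestly integer coefficients, and (4) immediately gives $S(m,n,n/2)=S(m,n)+P_{(m,n)}(n/2)\in\mathbb{Z}$. One small slip in your displayed rearrangement of (4) and (5): the constant term should enter as $+\bigl((-1)^{d}-1\bigr)S(m,n)$ rather than $-\bigl((-1)^{d}-1\bigr)S(m,n)$; this is harmless, both because integrality is all you use and because that term vanishes in the only nontrivial case $d\in\mathbb{E}$.
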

\begin{proof}
In view of (10), we may assume without loss that $d \in \mathbb{E}$.  Set $q = n/2$.  By (15) and Proposition 1 we have that
\begin{eqnarray}
S(n+2,n) & = & v(n+2,n) + {n+2 \choose 2}v(n,n)q^{2} \nonumber \\
	     & = & v(n+2,n) + {n+2 \choose 2}q^{2}.
\end{eqnarray}
Thus, (17) furnishes the base case:
\[ v(n+2,n) = S(n+2,n) - {n+2 \choose 2}q^{2}. \]
Now if $d = 2k$ and $v(n+2j,n) \in \mathbb{Z}$ for $(1 \leq j \leq k)$, one readily computes
\begin{equation}
 v(n+d+2,n) = S(n+d+2,n) - \sum_{j=1}^{k+1}{n+d+2 \choose 2j}v(n+d-2(j-1),n)q^{2j}.
\end{equation}
Since the RHS of (18) lies in $\mathbb{Z}$ by the induction hypothesis, the Proposition follows.
\end{proof}
\begin{prop}
Let $d \in \mathbb{O}$ and \textit{p} be prime.  Then, we have that
$$
\nu_{p}(S(m,n)) \geq \left\{ \begin{array}{rl}
\nu_{p}(e(n))-1 &\mbox{ if $p=2$} \\
\nu_{p}(e(n)) &\mbox{otherwise.}
\end{array} \right.
$$
\end{prop}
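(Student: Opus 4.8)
The plan is to recast the claimed pair of inequalities as the single divisibility statement $\tfrac{e(n)}{2}\mid S(m,n)$ and to prove that instead. This recasting is legitimate: since $d>0$ we have $S(m,n)>0$, so all $p$-adic valuations below are finite, and for every prime $p$ one has $\nu_{p}(e(n)/2)=\nu_{p}(e(n))-1$ when $p=2$ (because $e(n)\in\mathbb{E}$) and $\nu_{p}(e(n)/2)=\nu_{p}(e(n))$ when $p$ is odd; hence $\tfrac{e(n)}{2}\mid S(m,n)$ holds if and only if $\nu_{p}(S(m,n))$ satisfies exactly the stated bound for every prime $p$. I would then establish the divisibility, splitting on the parity of $n$.

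Suppose first that $n\in\mathbb{E}$, so $e(n)=n$ and $e(n)/2=n/2\in\mathbb{Z}_{+}$. Here I would read the conclusion directly off the expansion (16),
\[ S(m,n)=\sum_{j=0}^{(d-1)/2}\binom{m}{2j+1}v(m-2j-1,n)\left(\frac{n}{2}\right)^{2j+1}. \]
For $0\le j\le(d-3)/2$ the index difference $(m-2j-1)-n=d-2j-1$ is a positive even integer, so $v(m-2j-1,n)\in\mathbb{Z}$ by Proposition 7; and the remaining term, with $j=(d-1)/2$, equals $\binom{m}{d}v(n,n)(n/2)^{d}$, where $v(n,n)=|S(n,n)|=1$ by (2). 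Thus every summand is an integer multiple of $(n/2)^{2j+1}$, hence of $n/2=e(n)/2$, and therefore $\tfrac{e(n)}{2}\mid S(m,n)$.

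Suppose now that $n\in\mathbb{O}$, so that $e(n)=n+1\in\mathbb{E}$. The idea is to pass to the pair $(m+1,n+1)$, whose lower index is even while its difference $(m+1)-(n+1)=d$ is still odd. The classical recurrence $S(m+1,n+1)=(n+1)S(m,n+1)+S(m,n)$ rearranges to
\[ S(m,n)=S(m+1,n+1)-(n+1)\,S(m,n+1). \]
By the even case just treated, applied to $(m+1,n+1)$ (for which $e(n+1)=n+1$), we get $\tfrac{n+1}{2}\mid S(m+1,n+1)$; and $\tfrac{n+1}{2}\mid(n+1)S(m,n+1)$ is immediate. Subtracting yields $\tfrac{e(n)}{2}=\tfrac{n+1}{2}\mid S(m,n)$, and together with the valuation identity from the first step this proves the Proposition.

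The one genuinely non-mechanical point, which I expect to be the main obstacle, is the reduction used in the odd case: one must notice that the right auxiliary pair is $(m+1,n+1)$, so that the Stirling recurrence contributes exactly the factor $e(n)=n+1$ and leaves the difference $d$ odd, whereas the downward choice $(m-1,n-1)$ would introduce an irrelevant factor $n-1$ and fail to close the argument. A minor loose end is the boundary term $v(n,n)$ in the even case, which falls outside the hypothesis $d>0$ of Proposition 7 and so must be evaluated by hand from Ruiz's identity (2); the rest is bookkeeping.
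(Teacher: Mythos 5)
Your proposal is correct and follows essentially the same route as the paper: reduce the statement to the divisibility $\tfrac{e(n)}{2}\mid S(m,n)$, read the even case off formula (16) together with Proposition 7, and handle odd $n$ via the recurrence $S(m,n)=S(m+1,e(n))-e(n)S(m,e(n))$. Your explicit treatment of the boundary term $v(n,n)=1$ (via Ruiz's identity) is a detail the paper's proof passes over silently, but it is the same argument.
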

\begin{proof}
It is sufficient to show that $d \in \mathbb{O}$ implies $e(n)/2 \mid S(m,n)$.  First assuming that $n \in \mathbb{E}$, by (16) we obtain
\begin{equation}
\frac{S(m,n)}{n/2} = \sum_{j=0}^{\frac{d-1}{2}}{m \choose 2j + 1}v(m-2j-1,n)\bigg(\frac{n}{2}\bigg)^{2j}.
\end{equation}
Since Proposition 7 assures the RHS of (19) lies in $\mathbb{Z}$,  $(n/2) \mid S(m,n)$ follows.  Now if $n \in \mathbb{O}$, one observes 
\[ S(m,n) = S(m+1,e(n)) - e(n)S(m,e(n)). \]
Thus, Proposition 7 and (19) imply $e(n)/2 \mid S(m,n)$. This completes the proof.
\end{proof}
\begin{figure}[h]
	\centering
	\includegraphics[width=0.8\textwidth,natwidth=610,natheight=642]{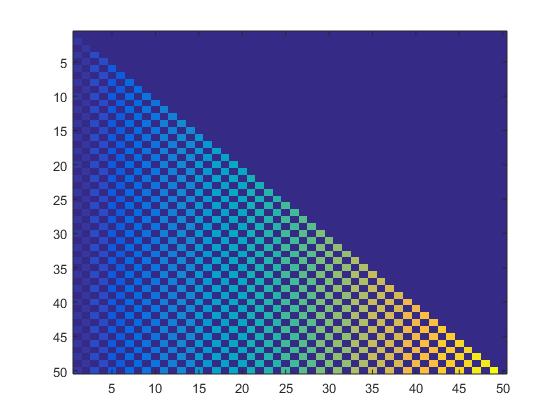}
	\caption{The numbers $S(m,n)$ such that $d \in \mathbb{O}$.  In the image above, each tile corresponds to an $(m,n)$ coordinate, $1 \leq m,n \leq 50$.  Dark blue tiles represent those $S(m,n)$ such that $d \in \mathbb{E} \cup \mathbb{Z}_{\leq 0}$.  Note that the remaining tiles, corresponding to the $S(m,n)$ such that $d \in \mathbb{O}$, are colored according to their divisibility by $e(n)/2$.}
\end{figure}
\begin{cor}
Let $d \in \mathbb{O}$.  Then $S(m,n)$ is prime only if $m = 3$ and $n = 2$.
\end{cor}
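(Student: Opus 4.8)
The plan is to leverage Proposition 8: since $d \in \mathbb{O}$, we have $e(n)/2 \mid S(m,n)$ (indeed this divisibility is exactly what was extracted inside the proof of Proposition 8). If $S(m,n)$ is prime, then the divisor $e(n)/2$ must equal either $1$ or $S(m,n)$ itself, and I would dispose of each alternative in turn.

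First I would rule out $e(n)/2 = S(m,n)$. Since $d > 0$ we have $m \ge n+1$, and from the recurrence $S(m,n) = nS(m-1,n) + S(m-1,n-1) \ge S(m-1,n)$ the quantity $S(m,n)$ is nondecreasing in $m$ for fixed $n$; iterating from $m$ down to $n+1$ gives $S(m,n) \ge S(n+1,n) = \binom{n+1}{2}$. On the other hand $e(n) \le n+1$ by the definition of the map $e$, so $e(n)/2 \le (n+1)/2$. For $n \ge 2$ we have $\binom{n+1}{2} = n(n+1)/2 > (n+1)/2 \ge e(n)/2$, so $e(n)/2 = S(m,n)$ is impossible; and for $n = 1$ we have $S(m,1) = 1$, which is not prime. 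Hence we are forced into $e(n)/2 = 1$, i.e. $e(n) = 2$, i.e. $n \in \{1,2\}$.

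It then remains to examine $n = 2$, the case $n = 1$ having just been excluded. Here $S(m,2) = 2^{m-1} - 1$, and $d = m-2 \in \mathbb{O}$ forces $m$ odd, so $m-1 = 2\ell$ with $\ell \ge 1$. Then $S(m,2) = 4^{\ell} - 1 = (2^{\ell} - 1)(2^{\ell} + 1)$; for $\ell \ge 2$ the factor $2^{\ell} - 1 \ge 3$ is nontrivial, so $S(m,2)$ is composite, while $\ell = 1$ yields $m = 3$ and $S(3,2) = 3$. Therefore $m = 3$, $n = 2$ is the only possibility, which is the assertion of the Corollary.

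I do not anticipate a genuine obstacle: the argument is a short deduction from Proposition 8 combined with the closed form $S(m,2) = 2^{m-1}-1$ and the monotonicity of $S(\cdot,n)$. The only point needing a moment's care is distinguishing the trivial factorization ($\ell = 1$, giving $2^{\ell}-1 = 1$) from a genuine one ($\ell \ge 2$) in the final step, which is precisely the reason $m = 3$ survives and no larger odd $m$ does.
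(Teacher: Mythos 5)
Your proof is correct, and its backbone coincides with the paper's: both arguments run through Proposition 8 (in the form $e(n)/2 \mid S(m,n)$ when $d \in \mathbb{O}$) to show that for $n > 2$ the number $S(m,n)$ has a proper divisor, and then treat small $n$ separately. Your framing via ``a prime's divisor $e(n)/2$ must be $1$ or the prime itself'' is a mild repackaging of the paper's size comparison $S(m,n) > n > e(n)/2$, with your bound $S(m,n) \ge S(n+1,n) = \binom{n+1}{2}$ versus $e(n)/2 \le (n+1)/2$ playing the same role; both are sound. The genuine divergence is the case $n=2$: the paper proves $3 \mid S(2N+1,2)$ by induction from the recurrence $S(2(k+1)+1,2) = 4S(2k+1,2)+3$, whereas you invoke the closed form $S(m,2) = 2^{m-1}-1$ and, since $d$ odd forces $m-1 = 2\ell$, factor $4^{\ell}-1 = (2^{\ell}-1)(2^{\ell}+1)$. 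Your factorization is more direct and makes visible \emph{why} $m=3$ is the unique survivor (it is exactly the degenerate case $2^{\ell}-1=1$), at the cost of importing the closed form for $S(m,2)$, which the paper never states but which follows in one line from Theorem 1; the paper's induction stays entirely inside its own recurrences. You also handle $n=1$ explicitly ($S(m,1)=1$ is not prime), a trivial case the paper passes over in silence, which is a small point in your version's favor.
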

\begin{proof}
Assume the hypothesis.  A combinatorial argument gives $S(3,2) = 3$.  If we suppose that $3 \mid S(2k+1,2)$, the identity  
\[ S(2(k+1)+1,2) = 4S(2k+1,2) + 3 \]
yields $3 \mid S(2(k+1) + 1,2)$.  Therefore, by induction we have that $3 \mid S(2N+1,2)$ for every $N \in \mathbb{Z}_{+}$.  However $S(2N+1,2) > S(3,2)$ if $N > 1$, and thus $S(2N+1,2)$ is prime only if $N=1$.  Now, assume that $n > 2$.  Then $e(n)/2 > 1$ and by Proposition 8, $e(n)/2 \mid S(m,n)$.  Noting $d > 0$ implies
\[S(m,n) = nS(m-1,n) + S(m-1,n-1) > n > \frac{e(n)}{2} \] 
it follows that $S(m,n)$ is composite. This completes the proof.
\end{proof}
Corollary 4 fully describes the primality of the numbers $S(m,n)$ such that $d \in \mathbb{O}$.  For those which satisfy $d \in \mathbb{E}$, infinitely many may be prime (indeed, the Mersenne primes are among these numbers). It is however possible to evaluate these $S(m,n)$ modulo $2$, using only a brief extension of the above results (Propositions 9-13).  We remark that these numbers produce a striking geometric pattern (known as the Sierpinski Gasket, Figure 6).  We now introduce
\[ \ell_{n} := \min\{k \in 4\mathbb{Z}_{+} : k \geq n\} - 3 = 1 + 4\bigg\lfloor \frac{n-1}{4} \bigg\rfloor. \]
The $\ell_{n}$ will eliminate redundancy in the work to follow (see Proposition 9, below).
\begin{prop}
Let $d \in \mathbb{E}$.  Then, we have that
\[ S(n+d,n) \equiv_{2} S(\ell_{n}+d,\ell_{n}).\]
\end{prop}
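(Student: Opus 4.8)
My plan is to descend from $n$ down to $\ell_n$ one unit at a time, always staying within the block $\{\ell_n,\ell_n+1,\ell_n+2,\ell_n+3\}$, and to show that each such descent preserves the residue of $S(\,\cdot\,+d,\,\cdot\,)$ modulo $2$. Precisely, I would first isolate the one-step claim: \emph{if $d\in\mathbb{E}$ and $k\geq 2$ is an integer with $k\not\equiv 1\pmod 4$, then $S(k+d,k)\equiv_{2}S(k-1+d,k-1)$.}

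To prove this claim I would start from the classical recursion $S(k+d,k)=kS(k+d-1,k)+S(k+d-1,k-1)$ and reduce modulo $2$. If $k$ is even, the term $kS(k+d-1,k)$ vanishes mod $2$ and the claim is immediate. If $k$ is odd, the hypothesis $k\not\equiv 1\pmod 4$ forces $k\equiv 3\pmod 4$, so $e(k)=k+1$ is divisible by $4$ and $\nu_{2}(e(k))\geq 2$. Since $(k+d-1)-k=d-1$ is positive and odd, hence lies in $\mathbb{O}$, Proposition 8 with $p=2$ applies to $S(k+d-1,k)$ and gives $\nu_{2}\bigl(S(k+d-1,k)\bigr)\geq \nu_{2}(e(k))-1\geq 1$. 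Thus $S(k+d-1,k)$ is even, $kS(k+d-1,k)\equiv_{2}0$, and again $S(k+d,k)\equiv_{2}S(k-1+d,k-1)$.

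It remains to iterate the claim. If $n\equiv 1\pmod 4$ then $\ell_n=n$ and the statement is trivial, so assume $n\not\equiv 1\pmod 4$; then $\ell_n<n\leq\ell_n+3$, and since $\ell_n\equiv 1\pmod 4$ every integer $k$ with $\ell_n<k\leq n$ satisfies $k\in\{\ell_n+1,\ell_n+2,\ell_n+3\}$, i.e.\ $k\equiv 0,2,$ or $3\pmod 4$, so in particular $k\geq 2$ and $k\not\equiv 1\pmod 4$. Hence the one-step claim applies at each such $k$, and applying it successively at $k=n,n-1,\dots,\ell_n+1$ telescopes to $S(n+d,n)\equiv_{2}S(\ell_n+d,\ell_n)$.

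The substantive ingredient is already in hand: Proposition 8 is exactly what makes the odd-$k$ case of the one-step claim go through, while the even-$k$ case and the telescoping are pure bookkeeping, so I do not expect a genuine obstacle. The one point needing care — and the reason $\ell_n$ is defined in blocks of four — is that the one-step descent genuinely fails at $k\equiv 1\pmod 4$ (for instance $S(7,5)$ is even but $S(6,4)$ is odd), so the descent must terminate exactly at $\ell_n$ and no lower.
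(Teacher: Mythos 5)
Your proposal is correct and takes essentially the same route as the paper: both arguments descend from $n$ to $\ell_n$ one unit at a time inside the block $\{\ell_n,\dots,\ell_n+3\}$, using the recursion $S(k+d,k)=kS(k+d-1,k)+S(k+d-1,k-1)$ together with Proposition 8 (applied with difference $d-1\in\mathbb{O}$ and $4\mid e(k)$) to kill the term $kS(k+d-1,k)$ modulo $2$ at each step. The only cosmetic difference is that you split the one-step claim by the parity of $k$, while the paper cases on $j=n-\ell_n\in\{1,2,3\}$, invoking Proposition 8 for $j\in\{2,3\}$; your observation that the descent genuinely fails at $k\equiv 1\pmod 4$ is a nice sanity check but is the same underlying mechanism.
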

\begin{proof}
Assume without loss that $n \neq \ell_{n}$.  Then, there exists $1 \leq j \leq 3$ such that $n  = \ell_{n} + j$.  If $j = 1$, then $n \in \mathbb{E}$ so that
\[ S(n+d,n) \equiv_{2} S(n-1+d,n-1) \equiv_{2} S(\ell_{n}+d,\ell_{n}). \]
Now if $j \in \{2,3\}$, notice $4 \mid e(n)$ and thus Proposition 8 assures $2 \mid S(n+(d-1),n)$.  Thus,
\[ S(n+d,n) \equiv_{2} S(\ell_{n}+(j-1)+d, \ell_{n}+(j-1)). \]
Taking $j=2$ then $j=3$ above completes the proof.
\end{proof}
With the use of Proposition 9, it follows that for every $d \in \mathbb{E}$
\[ 1 \equiv_{2}  S(1+d,1) \equiv_{2} \cdots \equiv_{2} S(4+d,4). \]
Before continuing in this direction, we first prove a generalization of the recursive identity $S(m,n) = nS(m-1,n) + S(m-1,n-1)$ for the sake of completeness.
\begin{lem}
Let $n > 1$ and $d > 0$.  Then, for \hspace{0.5mm} $1 \leq k \leq d$,
\[ S(n+d,n) = n^{d-k+1}S(n+k-1,n) + \sum_{j=0}^{d-k}n^{j}S(n-1+(d-j),n-1)\]
\end{lem}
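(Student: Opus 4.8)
The plan is to iterate the classical recurrence $S(m,n) = nS(m-1,n) + S(m-1,n-1)$ (valid for all $m \geq 1$, $n \geq 2$) and keep track of the terms peeled off. Writing $m = n+d$, one expects that after applying the recurrence $\ell$ times one obtains the auxiliary identity
\[ S(n+d,n) = n^{\ell}S(n+d-\ell,n) + \sum_{j=0}^{\ell-1} n^{j}\,S\bigl(n-1+(d-j),\,n-1\bigr) \]
for $0 \leq \ell \leq d$; taking $\ell = d-k+1$ recovers the statement of the Lemma, since then $n+d-\ell = n+k-1$ and, for each $j$ in the sum, $n+d-1-j = n-1+(d-j)$. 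So the approach is to prove this auxiliary identity by induction on $\ell$ and then perform the substitution $k = d-\ell+1$ at the end.

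For the base case $\ell = 0$ the right-hand side is just $S(n+d,n)$ (the sum is empty), so there is nothing to prove. For the inductive step, assume the auxiliary identity for some $\ell$ with $0 \leq \ell < d$, and apply $S(m,n) = nS(m-1,n) + S(m-1,n-1)$ — legitimate because the first index $n+d-\ell$ exceeds $n \geq 2$ — to the leading term: $S(n+d-\ell,n) = nS(n+d-\ell-1,n) + S(n+d-\ell-1,n-1)$. Substituting and multiplying by $n^{\ell}$, the first piece yields $n^{\ell+1}S(n+d-(\ell+1),n)$, while the second piece yields $n^{\ell}S(n+d-\ell-1,n-1) = n^{\ell}S(n-1+(d-\ell),n-1)$, which is precisely the $j=\ell$ term of the sum. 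Thus the range of summation extends from $\{0,\dots,\ell-1\}$ to $\{0,\dots,\ell\}$, giving the auxiliary identity with $\ell$ replaced by $\ell+1$ and closing the induction. Finally, setting $\ell = d-k+1$ (so that $1 \le k \le d$ corresponds to $1 \le \ell \le d$) and rewriting the indices as above produces the displayed formula verbatim.

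The argument is essentially bookkeeping, so I do not anticipate a real obstacle; the only point demanding care is the re-indexing of the summation — namely checking that the term $n^{\ell}S(n+d-\ell-1,n-1)$ created at each step lands exactly at position $j = \ell$ of the telescoped sum, and that the exponent on the leading Stirling number stays synchronized with the truncation point $n+d-\ell$. I would also make the translation between the $\ell$-indexed auxiliary identity and the $k$-indexed statement fully explicit, so that the final display matches the Lemma as written.
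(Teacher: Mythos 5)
Your proof is correct and is essentially the same as the paper's: both iterate the recurrence $S(m,n)=nS(m-1,n)+S(m-1,n-1)$ and induct on the number of applications, your parameter $\ell$ being just the reparametrization $\ell=d-k+1$ of the paper's induction variable (the paper starts from $k=d$ and steps $\xi\mapsto\xi-1$, while you start from the trivial $\ell=0$ and step $\ell\mapsto\ell+1$). The bookkeeping of the $j=\ell$ term and the range $1\le k\le d$ checks out, so no gap.
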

\begin{proof}
We clearly have
\[ S(n+d,n) = n^{d-d+1}S(n+d-1,n) + \sum_{j=0}^{d-d}n^{j}S(n-1+(d-j),n-1). \]
Now, assume that for $1 \leq \xi \leq d$,
\[ S(n+d,n) = n^{d-\xi+1}S(n+\xi-1,n) + \sum_{j=0}^{d-\xi}n^{j}S(n-1+(d-j),n-1). \]
Then, by a brief computation
\begin{eqnarray}
S(n+d,n) & = & n^{d-\xi+1}(nS(n+\xi-2,n) + S(n-1+(\xi-1),n-1)) \nonumber \\ \nonumber
		 & + & \sum_{j=0}^{d-\xi}n^{j}S(n-1+(d-j),n-1) \nonumber \\ \nonumber
		 & = & n^{d-(\xi-1)+1}S(n+(\xi-1)-1,n) + \sum_{j=0}^{d-(\xi-1)}n^{j}S(n-1+(d-j),n-1). \nonumber
\end{eqnarray}
The Lemma now follows by induction.
\end{proof}
\begin{prop}[Parity Recurrence]
Let $d \in \mathbb{E}$ and $n > 4$.  Then, we have that
\end{prop}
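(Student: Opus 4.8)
The statement should be the ``halving'' congruence completing the reduction begun in Proposition~9, i.e.
\[ S(n+d,n) \;\equiv_{2}\; S\!\left(\Big\lceil \tfrac{n}{2}\Big\rceil + \tfrac{d}{2},\ \Big\lceil \tfrac{n}{2}\Big\rceil\right) \qquad (d \in \mathbb{E},\ n > 4) \]
(for $n=\ell_{n}=4t+1$ the right-hand side is just $S\big(2t+1+\tfrac{d}{2},\,2t+1\big)$), and I would establish it by reducing a generating function modulo $2$. Start from the classical identity (see [3])
\[ \sum_{m \geq 0} S(m,n)\,x^{m} \;=\; \frac{x^{n}}{(1-x)(1-2x)\cdots(1-nx)}. \]
Modulo $2$ each factor $1-jx$ with $j$ even becomes $1$ and each with $j$ odd becomes $1+x$; since $\{1,\dots,n\}$ contains exactly $\lceil n/2\rceil$ odd integers, this yields
\[ \sum_{m \geq 0} S(m,n)\,x^{m} \;\equiv\; \frac{x^{n}}{(1+x)^{\lceil n/2\rceil}} \pmod{2} \]
in $\mathbb{F}_{2}[[x]]$. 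Reading off the coefficient of $x^{n+d}$ and using $\binom{-c}{d}=(-1)^{d}\binom{c+d-1}{d}$ gives the closed form $S(n+d,n) \equiv_{2} \binom{\lceil n/2\rceil + d - 1}{d}$, valid for all $d \geq 0$.

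It remains, for even $d$, to identify this binomial coefficient with $S(\lceil n/2\rceil+d/2,\lceil n/2\rceil)$ --- equivalently, by the same closed form with argument $c:=\lceil n/2\rceil$, with $\binom{\lceil c/2\rceil + d/2 - 1}{d/2}$ modulo $2$. Here I would use the Frobenius relation $(1+x)^{2}=1+x^{2}$ in $\mathbb{F}_{2}[[x]]$ to write $(1+x)^{-c} = (1+x)^{-c_{0}}(1+x^{2})^{-\lfloor c/2\rfloor}$ with $c_{0}=c\bmod 2$, and extract the coefficient of $x^{n+2e}$ (with $e=d/2$) in $x^{n}(1+x)^{-c}$: if $c$ is even only $(1+x^{2})^{-c/2}$ contributes and the coefficient is $\binom{c/2+e-1}{e}$ outright; if $c$ is odd the stray factor $(1+x)^{-1}=\sum_{i\geq 0}x^{i}$ turns it into the partial sum $\sum_{k=0}^{e}\binom{\lfloor c/2\rfloor + k - 1}{k}$, which the hockey-stick identity collapses to $\binom{\lfloor c/2\rfloor + e}{e}$. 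In either case the value equals $\binom{\lceil c/2\rceil + e - 1}{e}$, which the closed form recognizes as $S(\lceil n/2\rceil+d/2,\lceil n/2\rceil) \bmod 2$; the congruence follows. (One can instead stay inside the paper's framework: Proposition~9 reduces to $n=\ell_{n}\equiv 1 \pmod{4}$, the congruence $S(m,n)\equiv_{2}S(m-1,n-1)$ for even $n$ disposes of the parity of $n$ without altering $\lceil n/2\rceil$, and equations (15)--(16) with Proposition~7 rewrite $S(n+d,n)\bmod 2$ through the integers $v(m,n)$ --- but the bookkeeping is heavier.)

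The hypothesis $n>4$ is merely a normalization: by Proposition~9 every case reduces to $n=\ell_{n}$, and the displayed congruence is a genuine descent precisely once $\lceil n/2\rceil\geq 2$; for $n\in\{1,2,3,4\}$ it degenerates to the already-known fact that $S(n+d,n)$ is odd whenever $d$ is even. I expect the only real friction to be arithmetic rather than structural: proving $\binom{\lceil n/2\rceil+d-1}{d}\equiv_{2}\binom{\lceil\lceil n/2\rceil/2\rceil + d/2 - 1}{d/2}$ for even $d$ is a brief Lucas/Kummer computation, but it genuinely splits according to the parity of $\lceil n/2\rceil$ --- the case distinction encoded above by the $(1+x)^{-c_{0}}$ factor --- and it is getting that split, and the attendant ceiling/floor arithmetic, exactly right that needs care.
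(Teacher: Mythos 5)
The statement you reconstructed is not the one the paper asserts. The display belonging to Proposition 10 (it sits just after the environment in the source) is
\[ S(n+d,n) \;\equiv_{2}\; \sum_{j=0}^{d/2} S\big(\ell_{n-4}+(d-2j),\,\ell_{n-4}\big), \]
a row-to-row \emph{partial-sum} recurrence: with $\ell_{n}=1+4i$ and $d=2j$, the parity of the $(i,j)$ entry of the matrix $P$ of Remark 4 equals the sum of the first $j+1$ parities in row $i-1$. That is exactly what produces $p_{ij}\equiv p_{i-1,j}+p_{i,j-1} \pmod{2}$, hence the Pascal structure exploited in Remark 4, in (25)--(26), and in Proposition 13. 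Your proposed target, the halving congruence $S(n+d,n)\equiv_{2}S(\lceil n/2\rceil+d/2,\lceil n/2\rceil)$, is a different (if true) claim, and proving it does not establish the recurrence above. The paper's own proof is also along entirely different, more elementary lines: reduce to $n=\ell_{n}$ by Proposition 9; expand $S(n+d,n)=n^{d}S(n,n)+\sum_{j=0}^{d-1}n^{j}S(n-1+(d-j),n-1)$ by Lemma 1; drop the powers $n^{j}$ modulo $2$ since $n$ is odd; discard the odd-offset terms $S(n-1+(d-2j-1),n-1)$ by Proposition 8 because $4\mid n-1$; and convert the surviving even-offset terms to $\ell_{n-4}$ via Proposition 9 (using $\ell_{n-1}=\ell_{n-4}$), the leading $1$ being $S(\ell_{n-4},\ell_{n-4})$ mod $2$. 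The hypothesis $n>4$ is there so that $\ell_{n-4}$ is defined and $n-1$ is a positive multiple of $4$, not as the normalization you describe.

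On the positive side, the mathematics you actually carry out is sound, and it is strong enough to recover the real statement with one extra step. Your reduction gives the closed form $S(n+d,n)\equiv_{2}\binom{\lceil n/2\rceil+d-1}{d}$; with $n=\ell_{n}=4i+1$, $d=2j$ this is $\binom{2i+2j}{2j}\equiv_{2}\binom{i+j}{j}$, while each summand on the right of Proposition 10 satisfies $S(\ell_{n-4}+2k,\ell_{n-4})\equiv_{2}\binom{(2i-1)+2k-1}{2k}\equiv_{2}\binom{i-1+k}{k}$, so the hockey-stick identity $\sum_{k=0}^{j}\binom{i-1+k}{k}=\binom{i+j}{j}$ would close the loop. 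Had you aimed the generating-function argument at that identity, you would obtain Proposition 10 (and essentially Proposition 11 and (25) at once) by a genuinely different and arguably cleaner route than the paper's Lemma 1/Proposition 8/Proposition 9 bookkeeping. As written, however, the proposal proves a neighboring congruence rather than the Parity Recurrence itself, so it leaves the paper's intended conclusion --- and with it the Pascal-matrix machinery that the subsequent results depend on --- unproved.
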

\vspace{-12pt}
\[ S(n+d,n) \equiv_{2} \sum_{j=0}^{d/2}S(\ell_{n-4}+(d-2j),\ell_{n-4}). \]
\begin{proof}
In view of Proposition 9, we may assume $n = \ell_{n}$.  Consequently, $\ell_{n-1} = \ell_{n-4}$.  Now expanding $S(n+d,n)$ into a degree $d$ polynomial in $n$-odd via Lemma 1, we obtain by Proposition 9 and the formula (16)
\begin{eqnarray}
S(n+d,n) & \equiv_{2} & n^{d}S(n,n) + \sum_{j=0}^{d-1}n^{j}S(n-1+(d-j),n-1) \nonumber \\ 
         & \equiv_{2} & 1 + \sum_{j=0}^{\frac{d}{2}-1}S(\ell_{n-4}+(d-2j),\ell_{n-4}) \\
         &   +        & \sum_{j=0}^{\frac{d}{2}-1}S(n-1+(d-2j-1),n-1). \nonumber
\end{eqnarray}
Noting $\ell_{n} > 4$, it follows $4 \mid (n-1)$.  Thus Proposition 8 implies $2 \mid S(n-1+(d-2j-1),n-1)$ for each $0 \leq j \leq d/2-1$.  
That is,
\begin{equation}
\sum_{j=0}^{\frac{d}{2}-1}S(n-1+(d-2j-1),n-1) \equiv_{2} 0.
\end{equation}
Finally, since
\begin{equation}
1 \equiv_{2} S(\ell_{n-4},\ell_{n-4})
\end{equation}
the Proposition is established by taking (21) and (22) in (20).
\end{proof}
\noindent
\textbf{Remark 4.}  We may now construct an infinite matrix which exhibits the distribution of the even and odd numbers $S(n+d,n)$ if $d \in \mathbb{N}\setminus\mathbb{O}$:
$$
P = [p_{ij}] =
\left[ \begin{array}{ccccccccccccccccc}
1 & 1 & 1 & 1 & 1 & \cdots \\
1 & 0 & 1 & 0 & 1 & \cdots \\
1 & 1 & 0 & 0 & 1 & \cdots \\
1 & 0 & 0 & 0 & 1 & \cdots \\
1 & 1 & 1 & 1 & 0 & \cdots \\
\vdots & \vdots & \vdots & \vdots & \vdots & \ddots \\
\end{array}\right]
$$
In matrix $P$, each entry $p_{ij}$ $(i,j \in \mathbb{N})$ denotes the parity of those numbers $S(n+d,n)$ $(d \in \mathbb{N}\setminus\mathbb{O}$) which satisfy $\ell_{n} = 1 + 4i$ ($= 1 + 4\lfloor (n-1)/4 \rfloor)$ and $d = 2j$. The $p_{ij}$ are determined by the equations
\begin{equation}
p_{0j} = p_{i0} = 1 \hspace{5mm} (i,j \geq 0)
\end{equation}
\begin{equation}
p_{ij} = \bigg(\sum_{k=0}^{j} p_{i-1,k}\bigg) \mbox{ (mod $2$)} = (p_{i-1,j} + p_{i,j-1}) \mbox{ (mod $2$)} \hspace{5mm} (i,j \geq 1).
\end{equation}
(As an example, below we compute $P_{100} = [p_{ij} : 0 \leq i,j \leq 100]$ (Figure 6).  This matrix is profitably represented as a "tapestry" of colored tiles, so that its interesting geometric properties are accentuated.)
\begin{figure}[H]
	\centering
	\includegraphics[width=0.8\textwidth,natwidth=610,natheight=642]{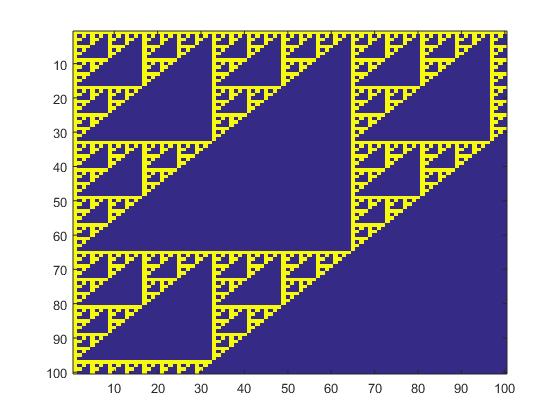}
	\caption{$P_{100}$.  Above, yellow tiles correspond to $p_{ij} = 1$.  Notice that this image is the Sierpinski Gasket. }
\end{figure}
Although (24) is nothing more than a reformulation of Proposition 10, the second equality in (24) (from left to right) indicates that $P$ is Pascal (to visualize this, rotate $P$ $45^{\mathrm{o}}$ so that $p_{00}$ is the "top" of Pascal's Triangle modulo 2.)  Thus, $P$ is symmetric, and an elementary geometric analysis yields
\begin{equation}
S(\ell_{n}+d,\ell_{n}) \equiv_{2} {i+j \choose j} \equiv_{2} {i+j \choose i} \hspace{5mm} (\ell_{n} = 1+4i, d = 2j).
\end{equation}
Now, by Kummer's Theorem, we have that
\begin{equation}
{i+j\choose j} \equiv_{2} 0 \mbox{ iff there exists $k \in \mathbb{N}$ such that $(i_{2})_{k} = (j_{2})_{k} = 1$}.
\end{equation}
Hence the following is immediate:
\begin{prop}
Let $d \in \mathbb{E}$.  Then $2 \mid S(m,n)$ if, and only if, there exists $k \in \mathbb{N}$ such that
\[ \bigg( \bigg\lfloor \frac{n-1}{4} \bigg\rfloor_{2}\bigg)_{k} = \bigg(\bigg(\frac{d}{2}\bigg)_{2}\bigg)_{k} = 1.\]
\end{prop}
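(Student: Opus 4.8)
The plan is to read the proposition off the preceding Remark 4, since all of the substantive work has already been done there; the only genuinely new step is a change of index variables. First I would fix $d \in \mathbb{E}$ and put $m = n+d$. As $d \in \mathbb{E}$ forces $d \geq 2 > 0$, Proposition 9 applies and yields $S(m,n) \equiv_{2} S(\ell_{n} + d, \ell_{n})$, so it suffices to determine the parity of $S(\ell_{n}+d,\ell_{n})$.

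Next I would set $i = \lfloor (n-1)/4 \rfloor$ and $j = d/2$, so that $\ell_{n} = 1 + 4i$ by the definition of $\ell_{n}$ and $d = 2j$. The congruence $S(\ell_{n}+d,\ell_{n}) \equiv_{2} {i+j \choose i}$ recorded in Remark 4 then gives $S(m,n) \equiv_{2} {i+j \choose i}$, and the mod-$2$ form of Kummer's theorem stated just after it tells us that $2 \mid {i+j \choose i}$ if and only if there is some $k \in \mathbb{N}$ with $(i_{2})_{k} = (j_{2})_{k} = 1$. Substituting $i = \lfloor(n-1)/4\rfloor$ and $j = d/2$ back into this last condition produces exactly the criterion in the statement, which finishes the argument.

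Since every line is a direct appeal to something already in hand --- Proposition 9, the identification $S(\ell_{n}+d,\ell_{n})\equiv_{2}{i+j \choose i}$, and the Kummer/Lucas criterion --- I do not expect any real obstacle: the proof is a short chain of equivalences. The one place to be slightly careful is bookkeeping the dictionary $i \leftrightarrow \lfloor(n-1)/4\rfloor$ and $j \leftrightarrow d/2$ against the hypotheses of Proposition 9 and the indexing conventions of the matrix $P$; in particular one should note that $\lfloor(n-1)/4\rfloor = 0$ --- so that $S(m,n)$ is odd --- precisely for the small values of $n$ with $\ell_{n} = 1$, consistent with $S(1+d,1) = 1$. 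All of the analytic and combinatorial content --- the symmetry of $P$, the geometric analysis behind the binomial identification, and the valuation bound of Proposition 8 that feeds Proposition 9 --- has been absorbed into those earlier results.
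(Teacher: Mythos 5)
Your argument is correct and is essentially identical to the paper's own proof: both reduce via Proposition 9 to $S(\ell_{n}+d,\ell_{n})$, identify this with $\binom{i+j}{j}$ modulo $2$ using (25) with $i=\lfloor(n-1)/4\rfloor$, $j=d/2$, and conclude by the Kummer criterion (26). No gaps; the extra bookkeeping remarks are fine but not needed.
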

\begin{proof}
By Proposition 9 and (25),
\[ S(m,n) \equiv_{2} S(\ell_{n}+d,\ell_{n}) \equiv_{2} {i+j \choose j} \hspace{5mm} (i = \lfloor(n-1)/4\rfloor,\hspace{1mm} d = 2j). \]
Hence the Proposition follows by (26).
\end{proof}
\noindent
\textbf{Remark 5.}  Although Proposition 11 provides an elegant means to calculate the parity of $S(m,n)$ if $d \in \mathbb{E}$, it may be further improved. Notice that Proposition 10 implies the $i^{th}$ row sequence 
\[ R_{i} = (R_{i}(j))_{j \in \mathbb{N}} = (S(\ell_{n}+2j,\ell_{n}) \mbox{ (mod $2$)})_{j \in \mathbb{N}} \hspace{5mm} (\ell_{n} = 1 + 4i) \]
is periodic.  Thus, by the symmetry of $P$, the $j^{th}$ column sequence 
\[ C_{j} = (C_{j}(i))_{i \in \mathbb{N}} = (S(1+4i+d,1+4i) \mbox{ (mod $2$)})_{i \in \mathbb{N}} \hspace{5mm} (d=2j) \]
is also periodic.  Denote the periods of these sequences as $T(R_{i})$ and $T(C_{j})$, respectively.  We remark that since $P$ is Pascal, $i = j$ implies $R_{i} = C_{j}$.  Conversely, $i \neq j$ implies $R_{i} \neq R_{j}$ and $C_{i} \neq C_{j}$ (Proposition 13).  We now show that both $T(R_{i})$ and $T(C_{i})$ are easily computed via (26).
\begin{prop}
Let $d \in \mathbb{E}$ and let $\tau$ denote the MSB position of $i_{2} \neq 0$. Then, 
\[T(R_{i}) = 2^{\tau+1}.\]
\end{prop}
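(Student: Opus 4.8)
The plan is to reduce everything to the binary description of $R_i$ that is already on the table. By the definition of $R_i$ in Remark 5 and by (25), $R_i(j) \equiv_2 \binom{i+j}{j}$ for every $j \in \mathbb{N}$; hence, by Kummer's Theorem in the form (26), $R_i(j) = 1$ precisely when there is no $k \in \mathbb{N}$ with $(i_2)_k = (j_2)_k = 1$ — that is, when the binary supports of $i$ and $j$ are disjoint. So $R_i$ is the $\{0,1\}$-indicator sequence of the set $\{j \in \mathbb{N} : (i_2)_k = 1 \Rightarrow (j_2)_k = 0 \text{ for all } k\}$, and it remains only to find the period of this set.

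First I would check that $2^{\tau+1}$ is a period of $R_i$. Since $\tau$ is the MSB position of $i_2 \neq 0$, we have $2^{\tau} \leq i < 2^{\tau+1}$, so $(i_2)_k = 0$ for every $k > \tau$. For any $j \in \mathbb{N}$, writing $j = r + 2^{\tau+1}q$ with $r = j \bmod 2^{\tau+1}$ gives $j + 2^{\tau+1} = r + 2^{\tau+1}(q+1)$, so $j$ and $j + 2^{\tau+1}$ carry the same binary digits in positions $0,\dots,\tau$ (the increment affects only positions $\geq \tau+1$, where $i$ vanishes). Consequently the disjointness condition against $i$ holds for $j$ if and only if it holds for $j + 2^{\tau+1}$, i.e. $R_i(j + 2^{\tau+1}) = R_i(j)$.

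Next I would establish minimality. Since $R_i$ is periodic and $2^{\tau+1}$ is a period, the minimal period $T(R_i)$ divides $2^{\tau+1}$, so $T(R_i) = 2^{s}$ for some $0 \leq s \leq \tau+1$, and it suffices to rule out $s \leq \tau$. If $s \leq \tau$, then $2^{\tau}$ is a multiple of $T(R_i)$ and hence itself a period, forcing $R_i(2^{\tau}) = R_i(0)$. But $R_i(0) \equiv_2 \binom{i}{0} = 1$, whereas the binary expansion of $2^{\tau}$ has its only $1$ in position $\tau$ and $(i_2)_\tau = 1$, so the disjointness condition fails for $j = 2^{\tau}$ and $R_i(2^{\tau}) = 0$. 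This contradiction gives $s = \tau+1$, i.e. $T(R_i) = 2^{\tau+1}$.

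The only delicate point is the carry-propagation observation underlying the periodicity step — that adding $2^{\tau+1}$ to $j$ leaves the low $\tau+1$ binary digits of $j$ untouched — but this is immediate from the division $j = (j \bmod 2^{\tau+1}) + 2^{\tau+1}\lfloor j/2^{\tau+1}\rfloor$. Everything else is a routine invocation of (25) and (26), so I anticipate no genuine obstacle; moreover, by the symmetry of $P$ recorded in Remark 5 the identical argument yields the analogous formula for $T(C_i)$.
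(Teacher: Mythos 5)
Your proposal is correct and follows essentially the same route as the paper: use (25)--(26) to read $R_i(j)$ off the binary supports of $i$ and $j$, note that adding $2^{\tau+1}$ does not disturb the bits in positions $0,\dots,\tau$ (where all of $i$'s bits live) to get $T(R_i)\mid 2^{\tau+1}$, and then contradict a smaller period by comparing $R_i(0)=1$ with a value Kummer's Theorem forces to be $0$. Your minimality step is a slightly streamlined version of the paper's (you evaluate directly at $j=2^{\tau}$ rather than showing all bits in positions $\tau',\dots,\tau$ of $i$ would have to vanish), but the key idea is identical.
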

\begin{proof}
Notice that $\tau$ is the MSB position of $i_{2}$ implies
\[ \{k \in \mathbb{N} : (i_{2})_{k} = (j_{2})_{k} = 1\} = \{k \in \mathbb{N} : (i_{2})_{k} = (j_{2} + q2^{\tau+1})_{k} = 1\} \hspace{5mm} (q \in \mathbb{N}). \]
Hence, (26) gives
\begin{equation}
{i+j\choose j} \equiv_{2} {i+j+q2^{\tau+1} \choose j+q2^{\tau+1}} \hspace{5mm} (q \in \mathbb{N}).
\end{equation}
Now by (27), we obtain $T(R_{i}) \mid 2^{\tau+1}$.  Assume $T(R_{i}) = 2^{\tau'}$ for some $0 \leq \tau' \leq \tau$.  Noting $p_{i0} = 1$, Kummer's Theorem then assures $(i_{2})_{k} = 0$ for $\tau' \leq k \leq \tau$, for otherwise there exists $t \in \mathbb{N}$ such that
\[ 1 \equiv_{2} {i\choose 0} \equiv_{2} {i+2^{\tau'+ t}\choose 2^{\tau'+ t}} \equiv_{2} 0. \]
Thus $(i_{2})_{\tau} = 0$, contradicting the hypothesis.  This result furnishes $T(R_{i}) \geq 2^{\tau+1}$, and therefore $T(R_{i}) = 2^{\tau+1}$ holds.
\end{proof}
\begin{cor}
Let $d \in \mathbb{E}$ and let $\eta$ denote the MSB position of $j_{2} \neq 0$. Then, 
\[T(C_{j}) = 2^{\eta+1}.\]
\end{cor}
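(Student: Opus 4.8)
The plan is to deduce the Corollary from Proposition 12 together with the symmetry of the Pascal matrix $P$ recorded in Remark 4. First I would observe that, being Pascal, $P$ is symmetric, so that $p_{ij} = p_{ji}$ for all $i,j \in \mathbb{N}$. Comparing the definitions of the row sequences $R_{i}$ and the column sequences $C_{j}$ given in Remark 5, this yields
\[ C_{j}(i) = p_{ij} = p_{ji} = R_{j}(i) \hspace{5mm} (i \in \mathbb{N}), \]
so that $C_{j} = R_{j}$ as sequences, and hence $T(C_{j}) = T(R_{j})$.

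It then remains only to invoke Proposition 12 with the index $j$ in place of $i$: since $\eta$ is, by hypothesis, the MSB position of $j_{2} \neq 0$, that Proposition gives $T(R_{j}) = 2^{\eta+1}$. Combining this with the identity $T(C_{j}) = T(R_{j})$ above establishes $T(C_{j}) = 2^{\eta+1}$, which is the assertion.

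I expect no genuine obstacle here: the Corollary is immediate once the identification $C_{j} = R_{j}$ is in hand. For completeness, though, one could instead argue directly by transposing the proof of Proposition 12. Since $\eta$ is the MSB position of $j_{2}$, one has $\{k \in \mathbb{N} : (i_{2})_{k} = (j_{2})_{k} = 1\} = \{k \in \mathbb{N} : (i_{2} + q2^{\eta+1})_{k} = (j_{2})_{k} = 1\}$ for every $q \in \mathbb{N}$, so (26) gives ${i+j+q2^{\eta+1} \choose i+q2^{\eta+1}} \equiv_{2} {i+j \choose j}$ and hence $T(C_{j}) \mid 2^{\eta+1}$. Then, using $p_{0j} = 1$ from (23) in place of $p_{i0} = 1$, Kummer's Theorem excludes $T(C_{j}) = 2^{\eta'}$ for any $\eta' \leq \eta$, since that would force $(j_{2})_{\eta} = 0$, contrary to hypothesis. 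Either way, $T(C_{j}) = 2^{\eta+1}$ follows.
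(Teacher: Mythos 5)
Your proposal is correct and follows the paper's own route: apply Proposition 12 with index $j$ to get $T(R_{j}) = 2^{\eta+1}$, then use the symmetry of the Pascal matrix $P$ (so $C_{j} = R_{j}$) to conclude $T(C_{j}) = 2^{\eta+1}$. Your explicit identification $C_{j}(i) = p_{ij} = p_{ji} = R_{j}(i)$ and the alternative direct argument merely spell out details the paper leaves implicit.
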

\begin{proof}
By the hypothesis and Proposition 12, we have that $T(R_{j}) = 2^{\eta+1}$.  Hence, the symmetry of $P$ yields $T(C_{j}) = 2^{\eta + 1}$ as desired.
\end{proof}
\noindent
\textbf{Remark 6.}  We may now improve (26) in the following sense.  Given $i$ and $j$, consider $p_{ij}$.  Due to Proposition 12, one obtains an equal entry by replacing $j$ with $j' = j \mbox{ (mod $T(R_{i}))$}$.  Similarly by Corollary 5, a replacement of $i$ with $i' = i \mbox{ (mod $T(C_{j'}))$}$ also yields an equal entry.  This process may be alternatively initiated with a replacement of \textit{i} and ended with a replacement of $j$ (depending upon which approach is most efficient, however observation of order is necessary).  We make this reduction in computational work precise below.
\begin{cor}
Let $d \in \mathbb{E}$ such that $d = 2j$, and $\ell_{n} = 1+4i$.  Denote  
\[ j^{1} = j \mbox{ (mod $T(R_{i}))$}, \hspace{2mm} i^{1} = i \mbox{ (mod $T(C_{j^{1}}))$}, \hspace{2mm} i^{2} = i \mbox{ (mod $T(C_{j}))$}, \hspace{2mm} j^{2} = j \mbox{ (mod $T(R_{i^{2}}))$}.\]   
Then, $\nu_{2}(S(m,n)) \geq 1$ if, and only if, there exists $k \in \mathbb{N}$ such that
\end{cor}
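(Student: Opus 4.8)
The plan is to trace the statement back to the binary-digit test of Proposition 11. By (25), $\nu_{2}(S(m,n)) \geq 1$ is equivalent to ${i+j \choose j} \equiv_{2} 0$ for $\ell_{n} = 1+4i$ and $d = 2j$, and by (26) this holds precisely when there is some $k \in \mathbb{N}$ with $(i_{2})_{k} = (j_{2})_{k} = 1$. Hence it suffices to show that the modular reductions defining $j^{1}, i^{1}$ and $i^{2}, j^{2}$ leave the parity of the relevant binomial coefficient unchanged, i.e.
\[ {i+j \choose j} \equiv_{2} {i^{1}+j^{1} \choose j^{1}} \equiv_{2} {i^{2}+j^{2} \choose j^{2}}, \]
after which (26) applied to the reduced coefficients produces the two equivalent criteria.

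For the first chain I would argue in two steps. Since $j^{1} \equiv j \pmod{T(R_{i})}$ and $T(R_{i})$ is the period of the row sequence $R_{i}$ (Proposition 12), we obtain ${i+j \choose j} \equiv_{2} {i+j^{1} \choose j^{1}}$; then, since $i^{1} \equiv i \pmod{T(C_{j^{1}})}$ and $T(C_{j^{1}})$ is the period of the column sequence $C_{j^{1}}$ (Corollary 5, via the symmetry of $P$), we obtain ${i+j^{1} \choose j^{1}} \equiv_{2} {i^{1}+j^{1} \choose j^{1}}$. The point requiring care is the order of operations: $T(R_{i})$ is read off from the original $i$, and only afterwards is $i$ reduced, using the period $T(C_{j^{1}})$ attached to the \emph{already reduced} $j^{1}$ rather than to $j$; carrying out the reductions in the reverse order would invoke a period that no longer applies. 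The second chain is the mirror image --- first $i^{2} \equiv i \pmod{T(C_{j})}$ gives ${i+j \choose j} \equiv_{2} {i^{2}+j \choose j}$, and then $j^{2} \equiv j \pmod{T(R_{i^{2}})}$ gives ${i^{2}+j \choose j} \equiv_{2} {i^{2}+j^{2} \choose j^{2}}$.

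The only genuine obstacle is making precise that a single reduction lands on the matrix entry $p_{i^{1}j^{1}}$ (resp. $p_{i^{2}j^{2}}$) \emph{as an entry of} $P$, so that a further reduction in the orthogonal direction is legitimate; this is exactly the informal content of Remark 6, and it follows by chaining the congruence (28) of Proposition 12 with its transpose analogue coming from Corollary 5. One should also dispatch the degenerate cases in which a reduced index vanishes --- for instance $i = 0$, that is $n \in \{1,2,3,4\}$, where Proposition 12 assigns no period but $R_{0}$ is the constant sequence $1$ by (23) --- since in each such case the relevant binomial coefficient equals $1$, so $S(m,n)$ is odd, consistent with the digit condition being unsatisfiable; note too that $j \geq 1$ always, because $d \in \mathbb{E}$ forces $d \geq 2$. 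Combining the two chains and applying (26) to each reduced binomial coefficient then yields both advertised forms of the criterion, completing the proof.
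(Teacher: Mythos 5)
Your argument is correct and takes essentially the same route as the paper, whose entire proof is the one-line observation that the claim follows by applying Proposition 12 (row periodicity) and Corollary 5 (column periodicity) to the Kummer criterion (26); you simply spell out the two reduction chains and the importance of their order. The only quibbles are a reference slip --- the periodicity congruence you invoke from Proposition 12 is equation (27), not (28) --- and your handling of the degenerate cases $i=0$ or a vanishing reduced index, which is a welcome extra detail the paper leaves implicit.
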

\vspace{-12pt}
\[ \hspace{-100mm} \mbox{(A) $(i^{1}_{2})_{k} = (j^{1}_{2})_{k} = 1$} \]
\[ \hspace{-99mm} \mbox{(B) $(i^{2}_{2})_{k} = (j^{2}_{2})_{k} = 1$}. \]
\begin{proof}
The assertion follows by applying Proposition 12 and Corollary 5 to (26).
\end{proof}
Let $i \in \mathbb{N}$ be given and $\tau$ be as in Proposition 12.  Call 
\[ f_{i} = (R_{i}(0),R_{i}(1),\dots,R_{i}(2^{\tau+1}-1)) \]
the \textit{parity frequency} of $R_{i}$.  It will now be shown that the parity frequency associated to each $R_{i}$ is unique.
\begin{prop}[Uniqueness of Parity Frequencies]
Let $i,k \in \mathbb{N}$, $i \neq k$.  Then, $f_{i} \neq f_{k}$.
\end{prop}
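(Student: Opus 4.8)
The plan is to push everything through the dictionary established in (25)--(27) between the row entries and binomial coefficients modulo $2$, and then, given $i \neq k$, to exhibit a single coordinate at which $f_{i}$ and $f_{k}$ disagree. Recall that for the relevant range of indices the entries of $f_{i}$ are $f_{i}(j) \equiv_{2} {i+j \choose j}$ by (25), and that by Proposition 12 the length of $f_{i}$ is $T(R_{i}) = 2^{\tau_{i}+1}$, where $\tau_{i}$ is the MSB position of $i_{2}$ (for $i = 0$ the sequence $R_{0}$ is constant $1$, so we may take $f_{0} = (1)$, a tuple of length $1$).

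First I would dispose of the case in which $i_{2}$ and $k_{2}$ have distinct MSB positions, which in particular covers the situation $0 \in \{i,k\}$. Here the tuples $f_{i}$ and $f_{k}$ simply have different lengths, $2^{\tau_{i}+1} \neq 2^{\tau_{k}+1}$ (or $1$ versus $2^{\tau+1}$), so they cannot be equal. Hence I may assume $i,k \geq 1$ and that $i_{2}$ and $k_{2}$ share a common MSB position $\tau$; then $f_{i}$ and $f_{k}$ are both indexed by $j \in \{0,1,\dots,2^{\tau+1}-1\}$. Since $i \neq k$ while $(i_{2})_{\tau} = (k_{2})_{\tau} = 1$ and all higher binary digits of both vanish, there is a position $r$ with $0 \leq r < \tau$ at which the two expansions differ; without loss of generality $(i_{2})_{r} = 1$ and $(k_{2})_{r} = 0$. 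I would then test the coordinate $j := 2^{r}$, which is a legitimate index because $2^{r} < 2^{\tau} < 2^{\tau+1}$. Applying (26) at this $j$: the relation $(i_{2})_{r} = (j_{2})_{r} = 1$ forces ${i+j \choose j} \equiv_{2} 0$, so $f_{i}(j) = 0$; on the other hand the only set bit of $j_{2}$ sits at position $r$, where $(k_{2})_{r} = 0$, so no position meets the hypothesis of (26) and ${k+j \choose j} \equiv_{2} 1$, i.e.\ $f_{k}(j) = 1$. Thus $f_{i}(j) \neq f_{k}(j)$, and therefore $f_{i} \neq f_{k}$.

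I do not anticipate a real obstacle: the mathematical content is entirely contained in the binomial-coefficient reformulation (25)--(27) and in Proposition 12, and the only care required is bookkeeping — separating off the unequal-MSB (and zero) cases so that the lengths alone distinguish the tuples, and checking that the separating index $j = 2^{r}$ genuinely lies within one period $2^{\tau+1}$. (If one preferred to avoid the case split, one could instead let $r$ be the \emph{highest} bit at which $i_{2}$ and $k_{2}$ differ and argue uniformly, but partitioning by MSB position is cleaner and makes the length comparison immediate.)
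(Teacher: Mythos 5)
Your proof is correct, but it goes by a different route than the paper. You argue directly: after splitting off the case of unequal MSB positions (where the tuples already have different lengths $2^{\tau_i+1}\neq 2^{\tau_k+1}$, including $0\in\{i,k\}$), you use (25)--(26) to identify $f_i(j)$ with $\binom{i+j}{j}\bmod 2$ and exhibit an explicit separating index $j=2^{r}$, where $r<\tau$ is a bit at which $i_2$ and $k_2$ differ; Kummer's theorem then gives $f_i(j)=0\neq 1=f_k(j)$, and the check $2^{r}<2^{\tau+1}$ ensures $j$ lies within one period. The paper instead argues by contradiction via linear algebra: if $f_i=f_k$, then rows $i$ and $k$ of the finite Pascal matrix $P_M$ ($M=\max\{i,k\}$, using $M<T(R_M)$ from Proposition 12) coincide, forcing $\det(P_M)=0$, which contradicts the fact that a Pascal matrix satisfies $\det(P_M)\equiv_2 1$ (cited from [5]). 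Your version is constructive (it names a coordinate where the frequencies disagree) and self-contained, needing only (25), (26), and the period length; the paper's version is shorter and avoids bit-level case analysis, but is non-constructive and leans on the external determinant fact about Pascal matrices. Both are valid proofs of the proposition.
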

\begin{proof}
Assuming the hypothesis, suppose $f_{i} = f_{k}$.  Setting $M = \max\{i,k\} \geq 1$, consider the matrix $P_{M} = [p_{ij} : 0 \leq i,j \leq M]$
(where $p_{ij}$ is defined as in Remark 4).  Since we have that $M < T(R_{M})$ (a consequence of Proposition 12), it follows by our assumption that rows \textit{i} and \textit{k} in $P_{M}$ are identical.  Hence  $\det(P_{M}) = 0$.  However $P_{M}$ is Pascal, so that $\det(P_{M}) \equiv_{2} 1$ (contradiction).  Therefore, we conclude that $f_{i} \neq f_{k}$.
\end{proof}
\section{A Generalization of Wilson's Theorem}
We attribute the technique used in the proof below to Ruiz [2].
\begin{prop}[Generalized Wilson's Theorem]
Let $p \in \mathbb{Z}_{+}$.  Then $p$ is prime if, and only if, for every $n \in \mathbb{Z}_{+}$
\[ -1 \equiv_{p} B(n(p-1),p-1). \]
\end{prop}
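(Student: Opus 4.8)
The plan is to unwind the definition $B(m,n)=n!\,S(m,n)$ with the explicit formula of Theorem 1, so that
\[
B(n(p-1),\,p-1)=\sum_{k=0}^{p-1}\binom{p-1}{k}(-1)^{k}(p-1-k)^{\,n(p-1)},
\]
and then handle the two implications separately. Throughout one may assume $p\geq 2$, since the second argument $p-1$ of $B$ must be a positive integer; for $p=1$ the expression falls outside the scope of the definition of $B$.

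For the \textbf{forward implication} (prime $\Rightarrow$ congruence), I would fix $n\in\mathbb{Z}_{+}$ and split the sum according to whether the base $p-1-k$ vanishes. The single term $k=p-1$ contributes $0^{\,n(p-1)}=0$, since $n(p-1)\geq 1$. For every other $k$ the base $p-1-k$ lies in $\{1,\dots,p-1\}$ and is hence coprime to the prime $p$, so Fermat's Little Theorem gives $(p-1-k)^{\,n(p-1)}=\bigl((p-1-k)^{p-1}\bigr)^{n}\equiv_{p}1$. Therefore $B(n(p-1),p-1)\equiv_{p}\sum_{k=0}^{p-2}\binom{p-1}{k}(-1)^{k}$. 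Since $\sum_{k=0}^{p-1}\binom{p-1}{k}(-1)^{k}=(1-1)^{p-1}=0$, the truncated sum equals $-(-1)^{p-1}$, and $-(-1)^{p-1}\equiv_{p}-1$: if $p$ is odd then $p-1$ is even and $(-1)^{p-1}=1$, while if $p=2$ then $-1\equiv_{2}1$. This yields $-1\equiv_{p}B(n(p-1),p-1)$ for every $n$.

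For the \textbf{converse} I would argue by contraposition: assuming $p$ is composite, I exhibit a single $n$ that breaks the congruence, the natural choice being $n=1$, where $B(p-1,p-1)=(p-1)!\,S(p-1,p-1)=(p-1)!$; so the statement reduces to the composite case of classical Wilson. If $p=4$ then $(p-1)!=6\equiv_{4}2\not\equiv_{4}-1$. If $p\geq 6$ is composite, write $p=ab$ with $1<a\leq b<p$: when $a\neq b$ the integers $a,b$ are distinct elements of $\{2,\dots,p-1\}$, so $p=ab\mid(p-1)!$; when $a=b$ (so $p=a^{2}$ with $a\geq 3$) the integers $a$ and $2a$ are distinct elements of $\{2,\dots,p-1\}$ since $2a\leq a^{2}-1$, hence $2p=2a^{2}\mid(p-1)!$ and again $p\mid(p-1)!$. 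In either case $(p-1)!\equiv_{p}0\not\equiv_{p}-1$, so the congruence fails at $n=1$.

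I expect the only real content to be the forward direction's bookkeeping, which is indeed Ruiz's technique applied to formula (1): isolating the vanishing $k=p-1$ term, applying Fermat uniformly to the surviving bases, and recognizing the leftover alternating binomial sum as $-(-1)^{p-1}$ via $(1-1)^{p-1}=0$. The converse is routine once one observes that the $n=1$ instance recovers exactly $(p-1)!$; the only mild nuisance there is the exceptional value $p=4$, which must be checked by hand because the usual "two distinct factors inside $(p-1)!$'' argument degenerates. It is worth remarking that the $n=1$ case of the forward direction is precisely the prime direction of the classical Wilson theorem, so Proposition 14 genuinely subsumes it.
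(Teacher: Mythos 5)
Your proof is correct, and at its core it is the same technique the paper uses (and attributes to Ruiz): expand $B(n(p-1),p-1)$ as the alternating binomial sum from Theorem 1, kill the nonzero bases with Fermat's Little Theorem, and account for the leftover binomial coefficients modulo $p$. The differences are in the bookkeeping, and they are mostly simplifications. The paper routes the expansion through the Stirling function at $z=0$ (Propositions 1 and 2), which in your version is replaced by working directly with formula (1) -- the two sums differ only by the relabeling $k\mapsto n-k$, so you lose nothing and avoid invoking the Stirling-function machinery. Where the paper establishes $\binom{p-1}{k}\equiv_{p}(-1)^{k}$ term by term and then sums $p-1$ ones, you instead apply Fermat first and evaluate the residual sum $\sum_{k=0}^{p-2}\binom{p-1}{k}(-1)^{k}=-(-1)^{p-1}$ via $(1-1)^{p-1}=0$; this also lets you absorb $p=2$ into the general computation (via $-(-1)^{p-1}\equiv_{2}-1$), whereas the paper treats $p=2$ as a separate preliminary case. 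Finally, for sufficiency the paper simply notes that $n=1$ gives $-1\equiv_{p}(p-1)!$ and cites that this forces primality; you make that step self-contained by the contrapositive composite-case argument, including the exceptional value $p=4$, which is a welcome bit of completeness rather than a deviation. Your remark restricting to $p\geq 2$ (since $p-1$ must lie in $\mathbb{Z}_{+}$ for $B$ to be defined) also quietly handles the degenerate $p=1$ case, which the paper leaves implicit.
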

\begin{proof}
We first establish necessity.  For the case $p = 2$, one observes that for every $n \in \mathbb{Z}_{+}$
\[ B(n(p-1),p-1) \equiv_{2} 1!S(n,1) \equiv_{2} -1.  \]
Now if $p > 2$ is prime, we have by Propositions 1 and 2 that
\begin{equation}
(p-1)!S(n(p-1),p-1,0) \equiv_{p} B(n(p-1),p-1).
\end{equation}
Expanding the LHS of (28) (recall the definition of $S(m,n,z)$), we obtain
\[ \sum_{k=0}^{p-1}{p-1 \choose k}(-1)^{k}k^{n(p-1)} \equiv_{p} \sum_{k=0}^{p-1}{p-1 \choose k}(-1)^{k}\prod_{j=1}^{n}k^{p-1} \equiv_{p} B(n(p-1),p-1). \]
Since
\[ {p-1 \choose 0} \equiv_{p} 1, \hspace{5mm}  {p-1 \choose k} + {p-1 \choose k-1} \equiv_{p} {p \choose k} \equiv_{p} 0 \Rightarrow {p-1 \choose k} \equiv_{p} -{p-1 \choose k-1}\]
it follows that for each $0 < k < p$,
\[ {p-1 \choose k} \equiv_{p} (-1)^{k}.  \]
Hence we have that
\[ \sum_{k=0}^{p-1}{p-1 \choose k}(-1)^{k}\prod_{j=1}^{n}k^{p-1} \equiv_{p} \sum_{k=0}^{p-1}\prod_{j=1}^{n}k^{p-1}. \]
Finally, by Fermat's Little Theorem, we conclude
\[ \sum_{k=0}^{p-1}\prod_{j=1}^{n}k^{p-1} \equiv_{p} \sum_{k=1}^{p-1}1 \equiv_{p} p-1 \equiv_{p} -1 \equiv_{p} B(n(p-1),p-1). \]
For sufficiency, one observes that $-1 \equiv_{p} B(p-1,p-1)$ yields $-1 \equiv_{p} (p-1)!$, which implies that $p$ is prime.
\end{proof}
\begin{cor}[Wilson's Theorem]
Let $p \in \mathbb{Z}_{+}$.  Then $p$ is prime if, and only if,
\[ -1 \equiv (p-1)! \mbox{ (mod $p$)}. \]
\end{cor}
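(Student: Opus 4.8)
The plan is to read off Wilson's Theorem as the special case $n = 1$ of Proposition 14. The only supporting fact required is the evaluation
\[ B(p-1,p-1) = (p-1)!\,S(p-1,p-1) = (p-1)!, \]
which follows from the definition $B(m,n) = n!\,S(m,n)$ together with the trivial combinatorial identity $S(k,k) = 1$ (Theorem 1 with $m = n$: a $k$-element set admits exactly one partition into $k$ nonempty blocks). Once this identity is recorded, the congruence in Proposition 14 specialized to $n = 1$ is literally $-1 \equiv_p (p-1)!$.

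For the forward direction I would simply note that if $p$ is prime, then Proposition 14 supplies $-1 \equiv_p B(n(p-1),p-1)$ for \emph{every} $n \in \mathbb{Z}_{+}$, and in particular for $n = 1$, which after the substitution above is exactly $-1 \equiv (p-1)! \pmod p$. For the converse, rather than invoking the full family of congruences, I would observe that $-1 \equiv (p-1)! \pmod p$ by itself forces $p$ to be prime: this is precisely the content of the sufficiency step in the proof of Proposition 14, where the congruence $-1 \equiv_p B(p-1,p-1)$ is converted to $-1 \equiv_p (p-1)!$ and then seen to be incompatible with $p$ composite (for composite $p > 4$ one has $p \mid (p-1)!$, while the remaining cases $p \in \{1,4\}$ are checked by hand). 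Hence that step applies verbatim.

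I do not anticipate a genuine obstacle here, since all the substance resides in Proposition 14; the corollary needs only the bookkeeping identity $B(p-1,p-1) = (p-1)!$ and the remark that the easy direction of Wilson's Theorem is already embedded in the sufficiency argument for the generalized statement. The one point worth stating carefully in the write-up is that the converse does not require the congruence for all $n$ — just for $n = 1$ — so that the corollary really is a clean specialization.
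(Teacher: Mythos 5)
Your proposal is correct and follows essentially the same route as the paper: specialize Proposition 14 to $n=1$ and use $B(p-1,p-1)=(p-1)!\,S(p-1,p-1)=(p-1)!$. You are in fact slightly more complete, since the paper's proof of the corollary records only the necessity direction and leaves the converse to the elementary sufficiency argument already embedded in Proposition 14, which you spell out explicitly.
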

\begin{proof}
If $p$ is prime, take $n = 1$ in Proposition 14 to obtain $-1 \equiv (p-1)! \mbox{ (mod p)}$.
\end{proof}
Proposition 14 may be applied to investigate the relationship between the Stirling numbers of the second kind and the primes.  A result due to De Maio and Touset [4, Thm. 1 and Cor. 1] states that if $p > 2$ is prime, then
\begin{equation}
S(p + n(p-1),k) \equiv_{p} 0
\end{equation}
for every $n \in \mathbb{N}$ and $1 < k < p$.  As an example of applying the Generalized Wilson's Theorem, we have:
\begin{prop}
Let $p > 2$ be prime.  Then, for every $n \in \mathbb{Z}_{+}$ and $0 < k < p-1$,
\[ S(n(p-1),p-k) \equiv_{p} (k-1)!. \]
\end{prop}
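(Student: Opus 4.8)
The plan is to prove the congruence by induction on $k$, taking the base case $k=1$ straight from Proposition 14 and driving the inductive step with the De Maio--Touset congruence (29) together with the elementary recurrence $S(m+1,j)=jS(m,j)+S(m,j-1)$.

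\textbf{Base case.} For $k=1$ the asserted congruence reads $S(n(p-1),p-1)\equiv_{p}0!=1$. Proposition 14 gives $-1\equiv_{p}B(n(p-1),p-1)=(p-1)!\,S(n(p-1),p-1)$, while Corollary 6 (equivalently, the case $n=1$ of Proposition 14) gives $(p-1)!\equiv_{p}-1$. Dividing these two congruences yields $S(n(p-1),p-1)\equiv_{p}1$, as needed.

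\textbf{Inductive step.} Fix $k$ with $1\le k\le p-2$ and assume $S(n(p-1),p-k)\equiv_{p}(k-1)!$. The crucial observation is the identity $n(p-1)+1=p+(n-1)(p-1)$, which lets us invoke (29) with parameter $n-1\in\mathbb{N}$ in place of its $n$ and with $j:=p-k$ (legitimate since $1<j<p$, as $1\le k\le p-2$); this gives $S\big(n(p-1)+1,\,p-k\big)\equiv_{p}0$. Substituting $m=n(p-1)$ and $j=p-k$ into $S(m+1,j)=jS(m,j)+S(m,j-1)$ and reducing modulo $p$ then produces
\[
S\big(n(p-1),\,p-k-1\big)\equiv_{p}-(p-k)\,S\big(n(p-1),\,p-k\big)\equiv_{p}k\,S\big(n(p-1),\,p-k\big)\equiv_{p}k\cdot(k-1)!=k!,
\]
which is exactly the assertion for the index $k+1$. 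Running $k$ through $1,2,\dots,p-2$ (so that $p-k$ runs through $p-1,p-2,\dots,2$), this establishes the congruence for all $k$ with $0<k<p-1$.

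A second, non-inductive route is available and is arguably the more natural ``direct application'' of the machinery: by Proposition 1 the value of the Stirling function at $z=0$ gives $(p-k)!\,S(n(p-1),p-k)=\sum_{i=0}^{p-k}{p-k \choose i}(-1)^{p-k-i}i^{\,n(p-1)}$. Fermat's Little Theorem replaces $i^{\,n(p-1)}$ by $1$ for every $1\le i\le p-k$ (the $i=0$ term already vanishes), so the right side reduces modulo $p$ to $\sum_{i=1}^{p-k}{p-k \choose i}(-1)^{p-k-i}=(1-1)^{p-k}-(-1)^{p-k}=(-1)^{p-k+1}=(-1)^{k}$, using that $p$ is odd. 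On the other hand, peeling the factors $p-k+1,\dots,p-1\equiv_{p}-(k-1),\dots,-1$ off of $(p-1)!$ and invoking Wilson's Theorem $(p-1)!\equiv_{p}-1$ gives $(p-k)!\,(k-1)!\equiv_{p}(-1)^{k}$. Comparing the two and cancelling $(p-k)!$, which is invertible modulo $p$, yields $S(n(p-1),p-k)\equiv_{p}(k-1)!$.

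The only genuine obstacle in either approach is index bookkeeping: correctly translating between the exponent $n(p-1)$ in the statement and the exponent $p+n(p-1)$ appearing in (29), and keeping $p-k$ inside the window $2\le p-k\le p-1$ on which (29), the recurrence, and the invertibility of $(p-k)!$ modulo $p$ all hold. Everything else is routine.
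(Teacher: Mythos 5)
Your inductive argument is essentially identical to the paper's proof: the base case comes from Proposition 14 together with Wilson's theorem, and the inductive step rewrites $n(p-1)+1$ as $p+(n-1)(p-1)$, applies the De Maio--Touset congruence (29) with $1<p-k<p$, and feeds the recurrence $S(m+1,j)=jS(m,j)+S(m,j-1)$ to pass from $p-k$ to $p-k-1$; the index bookkeeping you flag is exactly what the paper handles with its $n_{0}$ and $\xi$. (A minor slip: Wilson's theorem is the corollary following Proposition 14, not Corollary 6, but since you also identify it as the $n=1$ case of Proposition 14 nothing is affected.) Your second route, however, is genuinely different from the paper's and is correct as written: from $(p-k)!\,S(n(p-1),p-k)=\sum_{i=0}^{p-k}\binom{p-k}{i}(-1)^{p-k-i}i^{\,n(p-1)}$, Fermat's Little Theorem collapses the right side modulo $p$ to $(1-1)^{p-k}-(-1)^{p-k}=(-1)^{k}$ (using that $p$ is odd), while $(p-1)!\equiv_{p}(p-k)!\,(-1)^{k-1}(k-1)!$ together with Wilson gives $(p-k)!\,(k-1)!\equiv_{p}(-1)^{k}$, and cancelling the unit $(p-k)!$ yields the claim. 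This bypasses (29), Proposition 14, and induction altogether, requiring only the explicit formula of Theorem 1, Fermat, and Wilson, and it establishes the uniformity in $n$ directly rather than inheriting it from an inductive hypothesis quantified over all $n$; the paper's route, by contrast, is designed to showcase the interplay between the generalized Wilson theorem and the De Maio--Touset congruence, which is the stated purpose of the section.
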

\begin{proof}
Appealing to Proposition 14, we have that for every $n \in \mathbb{Z}_{+}$
\[ -1 \equiv_{p} (p-1)!S(n(p-1),p-1) \equiv_{p} -S(n(p-1),p-1). \]
Hence $S(n(p-1),p-1) \equiv_{p} 1 \equiv_{p} (1-1)!$.  Assume now that for $0 < \xi < p-1$ we have
\begin{equation}
S(n(p-1),p-\xi) \equiv_{p} (\xi-1)! \hspace{5mm} (n \in \mathbb{Z}_{+}).
\end{equation}
Let $n_{0} \in \mathbb{Z}_{+}$ and $\xi + 1 < p - 1$.  By (29) it follows
\begin{eqnarray}
S(p + (n_{0}-1)(p-1),p-\xi) & \equiv_{p} & S(n_{0}(p-1) + 1,p-\xi) \nonumber \\ \nonumber
				            & \equiv_{p} & (p-\xi)S(n_{0}(p-1),p-\xi) + S(n_{0}(p-1),p-(\xi+1)) \nonumber \\ \nonumber
				            & \equiv_{p} & -\xi S(n_{0}(p-1),p-\xi) + S(n_{0}(p-1),p-(\xi+1)) \nonumber \\ 
			                & \equiv_{p} & 0.
\end{eqnarray}
Thus (30) and (31) imply that
\[ S(n_{0}(p-1),p-(\xi+1)) \equiv_{p} \xi S(n_{0}(p-1),p-\xi) \equiv_{p} \xi(\xi-1)! \equiv_{p} \xi!. \]
Since $n_{0}$ is arbitrary, the Proposition follows by induction.
\end{proof}
\noindent
\textbf{Acknowledgments.}  This paper presents an undergraduate research project supported and supervised by Dr. Vladimir Dragovic at UT Dallas.

\end{document}